\title{Categorification and the quantum Grassmannian}
\date{23 May 2022}
\author{Bernt Tore Jensen}
\address{BTJ: Department of Mathematical Sciences, Norwegian University of Science and Technology 
Gj\o vik, Teknologivegen 22, 2815 Gj\o vik, Norway}
\email{bernt.jensen@ntnu.no}
\author{Alastair King}
\address{AK: Mathematical Sciences, Univ. of Bath, Bath BA2 7AY, U.K.}
\email{a.d.king@bath.ac.uk}
\author{Xiuping Su}
\address{XS: Mathematical Sciences, Univ. of Bath, Bath BA2 7AY, U.K.}
\email{xs214@bath.ac.uk}
  \tikzset{equals/.style={double=none, double distance=2pt}, 
     cdmap/.style={black, thick, -angle 60}} 
\newcommand{\ph}{\phantom{i}}
\newcommand{\cdzero}{0_{\ph}}
\newcommand{\maplab}{\small}
\theoremstyle{plain}
\newtheorem{theorem}{Theorem}[section]
\newtheorem{proposition}[theorem]{Proposition}
\newtheorem{lemma}[theorem]{Lemma}
\theoremstyle{definition}
\newtheorem{definition}[theorem]{Definition}
\newtheorem{remark}[theorem]{Remark}
\numberwithin{equation}{section}
\renewcommand{\setminus}{\smallsetminus}
\renewcommand{\epsilon}{\varepsilon}
\renewcommand{\leq}{\leqslant}
\renewcommand{\geq}{\geqslant}
\newcommand{\sfrac}[2]{{\textstyle\frac{#1}{#2}}}
\newcommand{\longto}{\longrightarrow}
\newcommand{\isom}{\cong}
\newcommand{\sub}{\subseteq}
\newcommand{\isoto}{\stackrel{\isom}{\longrightarrow}}
\newcommand{\isofrom}{\stackrel{\isom}{\longleftarrow}}
\newcommand{\CC}{\mathbb{C}}
\newcommand{\NN}{\mathbb{N}}
\newcommand{\ZZ}{\mathbb{Z}}
\newcommand{\cF}{\mathcal{F}}
\newcommand{\cT}{\mathcal{T}}
\renewcommand{\a}{\textbf{a}}
\renewcommand{\b}{\textbf{b}}
\newcommand{\CM}{\operatorname{CM}}
\newcommand{\md}{\operatorname{mod}}
\newcommand{\Sub}{\operatorname{Sub}}
\newcommand{\Fac}{\operatorname{Fac}}
\newcommand{\Hom}{\operatorname{Hom}}
\newcommand{\End}{\operatorname{End}}
\newcommand{\Ext}{\operatorname{Ext}}
\newcommand{\add}{\operatorname{add}}
\newcommand{\coker}{\operatorname{coker}}
\newcommand{\rk}{\operatorname{rk}}
\newcommand{\len}{\operatorname{len}}
\renewcommand{\dim}{\operatorname{dim}}
\newcommand{\Irr}{\operatorname{Irr}}
\newcommand{\rad}{\operatorname{rad}}
\newcommand{\mat}[1]{\operatorname{\mathsf{#1}}}
\newcommand{\cm}{\mathsf{m}} 
\newcommand{\cn}{\mathsf{n}}
\newcommand{\Gr}{\operatorname{Gr}}
\newcommand{\Grass}[2]{\Gr_{#1,#2}}
\newcommand{\mm}{m} 
\newcommand{\nn}{n}
\newcommand{\Grr}{\Grass{\mm}{\nn}} 
\newcommand{\grQCA}{C_q(\Grr)}
\renewcommand{\k}{\CC} 
\newcommand{\tgr}{*}
\newcommand{\dual}{^\vee}
\newcommand{\transp}{^{\mathsf{t}}}
\newcommand{\cluschar}[1]{\Psi_{#1}} 
\newcommand{\rkone}[1]{M_{#1}}
\newcommand{\maxNC}{\mathcal{S}}
\newcommand{\clugr}[1]{\eta_{#1}}
\newcommand{\minor}[1]{D_{#1}}
\newcommand{\qminor}[1]{\Delta_{#1}}
\newcommand{\vble}{q^{1/2}}
\newcommand{\basering}{\CC[q^{\pm 1/2}]}
\newcommand{\qbasering}{\CC[q, q^{-1}]}
\newcommand{\genfld}{\CC(\vble)}
\newcommand{\funE}{\mathbf{E}}
\newcommand{\funP}{\mathbf{P}}
\newcommand{\funJ}{\mathbf{J}}
\newcommand{\gabquiv}{\Gamma}
\newcommand{\mult}[2]{\left[#1 : #2 \right]}
\newcommand{\brakp}[1]{\left[#1\right]_+}
\newcommand{\abs}[1]{\bigl|{#1}\bigr|}
\newcommand{\om}{{b_{ik}}} 
\newcommand{\Tbs}{\overline{T}^*}
\newcommand{\dimvec}{\operatorname{\mathbf{dim}}}
\newcommand{\maxdiag}{\operatorname{MaxDiag}}
\newcommand{\mxd}[2]{\operatorname{\mathsf{MD}}(#1,#2)}
\newcommand{\valG}{\operatorname{val}_G}
\newcommand{\partn}[1]{\lambda_{#1}}
\begin{document} 

\begin{abstract}
In an earlier work, we gave an (additive) categorification of Grassmannian cluster algebras using the category 
$\CM(A)$ of Cohen-Macaulay modules for a certain Gorenstein order $A$.
In this paper, for each cluster tilting object in $\CM(A)$, 
we construct a compatible pair $(\mat{B}, \mat{L})$ in a way that is consistent with mutation.
This then determines a quantum cluster algebra and we show that, 
when $(\mat{B}, \mat{L})$ comes from a cluster tilting object with rank one summands, 
this quantum cluster algebra is (generically) isomorphic to the corresponding quantum Grassmannian. 
The key ingredient in the construction is a new invariant $\kappa(M,N)$ of modules $M,N$ in $\CM(A)$,
which also has an intriguing link with mirror symmetry.

\bigskip
\noindent \emph{Keywords:}
Quantum Grassmannian, quantum cluster algebra, categorification.

\medskip
\noindent \emph{2020 Math. Subj. Class.} 
13F60 (primary), 14M15, 16G50, 20G42 (secondary). 
\end{abstract}

\maketitle
\section{Introduction}

Cluster algebras were created by Fomin and Zelevinsky \cite{FZ}. 
They are subalgebras of a freely-generated (or purely transcendental) field, 
generated by cluster variables obtained by mutations from an initial seed.  
Fomin and Zelevinksy also  showed in \cite{FZ} that the homogeneous 
coordinate rings $\CC[\Grass{2}{\nn}]$ of the simplest Grassmannians are 
cluster algebras (of finite type $A_{\nn-3}$). Scott \cite{Sc06} extended
this result to all Grassmannian coordinate rings $\CC[\Grr]$, although most 
are no longer of finite cluster type.

In \cite{JKS}, we gave an additive categorification of this cluster structure 
using the category $\CM(A)$ of Cohen-Macaulay modules for a certain 
algebra $A$, or equivalently, of equivariant Cohen-Macaulay modules for 
the plane curve singularity $x^\mm=y^{\nn-\mm}$. More precisely, in 
this categorification, clusters correspond to reachable cluster tilting objects
and cluster variables to reachable indecomposable rigid objects in this 
category. This work built on work of Geiss, Leclerc and Schr\"{o}er~\cite{GLS08} 
on categorification of the cluster structure on the affine coordinate 
rings of the open cells in Grassmannians and, much more generally, of unipotent 
subgroups of semisimple Lie groups.

Quantum cluster algebras were introduced by Berenstein and Zelevinsky \cite{BZ} as 
quantum analogues of the cluster algebras of Fomin and Zelevinsky. 
They are subalgebras of a \emph{quasi-free} skew field $\cF$, generated 
by cluster variables that are obtained by mutations from an initial quantum seed.
Note that by a quasi-free skew field we mean freely-generated by a finite set of 
quasi-commuting variables. Geiss, Leclerc and Schr\"{o}er partially generalised 
their results in \cite{GLS08} to the quantum setting in \cite{GLS13},
including the case of open cells in Grassmannians. By the delicate process of 
homogenising this result, Grabowski and Launois \cite{GL} proved that there is a 
quantum cluster algebra structure on the full Grassmannian quantum coordinate 
ring $\CC_q[\Grr]$, or at least on a small modification of it 
(see Theorem~\ref{thm:geniso-intro} below for more detail).

A crucial aspect of quantising a cluster algebra is that classical clusters become 
quasi-free generating sets for $\cF$. The quasi-commutation rules between these 
generators become additional data in the quantum seed,
namely a skew-symmetric matrix $\mat{L}$ that must be \emph{compatible} 
(Definition \ref{eq:compat}) with the exchange matrix $\mat{B}$.
Furthermore there is a corresponding mutation rule for $\mat{L}$.

Our first main result shows that such data $(\mat{B}, \mat{L})$
can be obtained as a natural invariant of 
the corresponding cluster tilting object in the category $\CM(A)$.  
Our approach is similar to \cite{GLS13}, but requires a new idea, because 
the homomorphism spaces in $\CM(A)$ are infinite dimensional. 
For any $M, N\in \CM(A)$, we consider the embedding
\[
\Hom_A(M, N)\to \Hom_Z(eM, eN),
\]
where $Z$ is the centre of $A$ and $e$ is a primitive idempotent in $A$ 
corresponding to a vertex of the quiver presenting $A$.  
The cokernel $K(M, N)$ of this embedding is in fact finite dimensional and the dimension 
\[
  \kappa(M, N)=\dim K(M, N)
\]
is the new invariant that we use to categorify the quantum seed.
More precisely, to a cluster tilting object $T=\oplus_i T_i$, we associate the standard exchange
matrix~$\mat{B}(T)$ corresponding to the Gabriel quiver of $\End(T)$ and a matrix $\mat{L}(T)=(\lambda_{ij})$ with 
\[
\lambda_{ij}=\kappa(T_j, T_i)-\kappa(T_i, T_j).
\] 
Note that all choices of the vertex are equivalent under a cyclic symmetry of $A$, and 
so this choice really just breaks that symmetry. 
A similar breaking of symmetry is necessary in the construction of the quantum Grassmannian. 

Using $\mu_k$ to denote mutation of  quantum cluster seeds as in 
\cite{BZ} and mutation of cluster tilting objects as \cite{BIRS}, we prove:

\begin{theorem}[Theorem~\ref{thm:main}]
The two matrices $\mat{B}$ and $\mat{L}$ associated to $T$ are compatible. 
Furthermore, mutation of cluster tilting objects is consistent with mutation of seed data,
i.e.~the pair associated to the mutated object $\mu_k(T)$
is the mutated pair $\mu_k(\mat{B}, \mat{L})$. 
\end{theorem}

This theorem means that our original category $\CM(A)$ provides consistent quasi-commutation rules for 
quantum clusters, by direct computation from cluster tilting objects. Note that we do 
not need to use a related category of graded objects with finite dimensional 
homomorphism spaces, as one could have expected. 

Next we show that these quasi-commutation rules give the familiar ones for the quantum minors.
Recall from \cite[\S5]{JKS} that $\CM(A)$ contains a collection of (rigid) rank one modules $M_I$, 
for each $\mm$-subset $I$ of $\{1,\ldots,\nn\}$, whose cluster characters are the classical minors 
(or Pl\"ucker coordinates) $\minor{I}$ in $\CC[\Grr]$. 
Furthermore, $\Ext^1(M_I, M_J)=0$ precisely when the labels $I,J$ 
are \emph{non-crossing} (Definition \ref{def:non-crossing}), and 
maximal non-crossing sets of labels correspond to cluster tilting objects in $\CM(A)$ with rank one summands.

On the other hand, the non-crossing condition is precisely the one for 
the quantum minors $\qminor{I}, \qminor{J}$ in $\CC_q[\Grr]$
to quasi-commute, as proved by Leclerc-Zelevinsky \cite{LZ98}, 
who also determined their quasi-commutation rule
\[
 \qminor{I}\qminor{J}=q^{c(I,J)}\qminor{J}\qminor{I},
\]
where $c(I,J)$ has an explicit combinatorial formula (see Definition \ref{def:non-crossing}).
Our second main result is thus to show the following.

\begin{theorem}[Theorem~\ref{thm:quasicomm}]
When $\qminor{I}$ and $\qminor{J}$ are quasi-commuting quantum minors,
\[
  c(I,J) = \kappa(M_J, M_I)-\kappa(M_I, M_J).
\]
\end{theorem}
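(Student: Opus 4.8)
The plan is to reduce the identity to a single reference computation and then transport it along cluster mutations, using Theorem~\ref{thm:main} and the quantum cluster structure of \cite{GL}. First observe that the right-hand side depends only on the unordered pair $\{M_I, M_J\}$ and not on any ambient object. When $\qminor{I}$ and $\qminor{J}$ quasi-commute, the labels are non-crossing, so by \cite{JKS} the pair $\{M_I, M_J\}$ is contained in a cluster tilting object $T = \bigoplus_{K\in\maxNC} M_K$ with rank one summands, indexed by a maximal non-crossing collection $\maxNC$; moreover $\maxNC$ may be chosen connected by mutations to the standard collection $\maxNC^{\circ}$ underlying Scott's initial seed for $\grQCA$ \cite{Sc06,JKS}. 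By the definition of $L(T)$, the quantity $\kappa(M_J, M_I) - \kappa(M_I, M_J)$ is exactly the entry $\lambda_{IJ}$ of $L(T)$. Hence it suffices to show that for every such $T$,
\[
L(T) = \bigl(c(K,K')\bigr)_{K,K'\in\maxNC},
\]
the matrix of Leclerc--Zelevinsky quasi-commutation exponents of the quantum minors $\{\qminor{K}\}_{K\in\maxNC}$ \cite{LZ98}; reading off the $(I,J)$-entry then gives the theorem.

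I would prove this by induction on the mutation distance of $\maxNC$ from $\maxNC^{\circ}$. For the base case, the reference modules $M_K$ with $K\in\maxNC^{\circ}$ have an explicit presentation as rank one Cohen--Macaulay $A$-modules \cite{JKS}, and I would compute $\kappa(M_K, M_{K'})$ directly from this presentation (equivalently, from the lattice/rim description of these modules), checking that the resulting antisymmetric matrix equals $\bigl(c(K,K')\bigr)_{K,K'\in\maxNC^{\circ}}$. The latter is, by \cite{GL}, precisely the $L$-matrix of the initial quantum seed of $\grQCA$, namely the quasi-commutation data of the initial quantum minors as recorded in \cite{LZ98}; this step also pins down all the $q^{1/2}$ and sign normalisations so that the two sides of the asserted identity agree on the nose rather than up to an overall factor.

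For the inductive step, let $\maxNC'$ be obtained from $\maxNC$ by a single mutation at $M_K$, so that $M_K$ is replaced by $M_{K'}$, and assume $L(T_{\maxNC}) = \bigl(c(L,L')\bigr)_{L,L'\in\maxNC}$. On the categorical side, $T_{\maxNC'} = \mu_K(T_{\maxNC})$ and Theorem~\ref{thm:main} gives $\bigl(B(T_{\maxNC'}), L(T_{\maxNC'})\bigr) = \mu_K\bigl(B(T_{\maxNC}), L(T_{\maxNC})\bigr)$. On the algebra side, applying the quantum mutation rule of \cite{BZ} to the quantum seed of $\grQCA$ with cluster $\{\qminor{L}\}_{L\in\maxNC}$ produces the seed for $\maxNC'$: the new cluster variable specialises at $q=1$ to the classical minor $\minor{K'}$ (the cluster variable at that seed, by \cite{JKS,Sc06}), and by \cite{GL} its bar-invariant quantum lift is $\qminor{K'}$ up to a central power of $q^{1/2}$, which does not affect quasi-commutation exponents; hence the $L$-matrix of this seed is $\bigl(c(L,L')\bigr)_{L,L'\in\maxNC'}$. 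Since the same mutation $\mu_K$ carries $\bigl(c(L,L')\bigr)_{\maxNC}$ to $\bigl(c(L,L')\bigr)_{\maxNC'}$, the inductive hypothesis propagates; as every maximal non-crossing collection is connected to $\maxNC^{\circ}$ by mutations \cite{JKS}, the induction is complete.

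The main obstacle is the algebra side of the inductive step: one must know that the quantum cluster variables obtained along a mutation path in $\grQCA$ remain quantum minors and control the central $q^{1/2}$-powers introduced by the (slightly modified, in the sense of \cite{GL}) quantum coordinate ring, so that the relevant $L$-matrix is the Leclerc--Zelevinsky matrix at every stage. The base-case evaluation of $\kappa$ on the reference modules is concrete but technical, relying on the explicit matrix-factorisation description of the $M_K$ from \cite{JKS}, and reconciling all sign and $q^{1/2}$ conventions so that no spurious factor survives is a final delicate-but-routine point. An alternative to the inductive transport would be to exploit the short exact sequences $0\to M_{I\cup J}\to M_I\oplus M_J\to M_{I\cap J}\to 0$ for non-crossing pairs from \cite{JKS}: these yield an additivity relation for $\kappa$ (the functors $\Hom_A$ and $\Hom_Z$ being exact on the relevant rigid modules) which could be matched against the parallel recursion satisfied by $c(I,J)$.
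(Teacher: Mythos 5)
Your proposal takes a genuinely different route from the paper, and one that is both considerably heavier and, in the context of this paper's goals, somewhat circular. The paper proves Theorem~\ref{thm:quasicomm} by a direct, local computation with no cluster machinery at all: for non-crossing $I,J$ in case (i) of Definition~\ref{def:non-crossing}, the explicit description of $\Hom_A(M_I,M_J)$ from \cite[Rem.~5.4]{JKS} as the $Z$-module generated by $t^{\alpha}$, where $\alpha$ is a minimal exponent vector determined by $I$ and $J$, identifies $K_v(M_I,M_J)$ with $Z/t^{\alpha_v}Z$ and hence $\kappa(M_I,M_J)=\alpha_v$. Tracking where $\alpha$ vanishes shows $\alpha_v=|J'|$ and symmetrically $\kappa(M_J,M_I)=|J''|$, so $\kappa(M_J,M_I)-\kappa(M_I,M_J)=|J''|-|J'|=c(I,J)$ immediately. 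That is the entire proof (Lemma~\ref{lem:kappa} plus a one-line case swap), and it uses nothing beyond the module descriptions of \cite{JKS}.

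Two concrete issues with your plan. First, your base case already contains the whole theorem: computing $\kappa(M_K,M_{K'})$ ``from the explicit presentation'' of the rank one modules is no easier for $K,K'$ in a special collection $\maxNC^{\circ}$ than for an arbitrary non-crossing pair $I,J$ --- the combinatorics of the exponent vector $\alpha$ is identical --- so if you can carry out the base case you can carry out Lemma~\ref{lem:kappa} directly and all the inductive transport becomes superfluous. Second, and more seriously, your inductive step leans on \cite{GL} for the statement that the mutated quantum cluster variable is (up to a controlled central $q^{1/2}$-power) the quantum minor $\qminor{K'}$, so that the $L$-matrix at the mutated seed is again the Leclerc--Zelevinsky matrix. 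But that is essentially the content the paper is trying to recover independently: in the paper's own development, Proposition~\ref{prop:BSLS} uses Theorem~\ref{thm:quasicomm} to identify $L_\maxNC=L(T_\maxNC)$, and only then does Proposition~\ref{prop:newcv=minor} establish (by a direct quantum Pl\"ucker computation, again using the $c(I,J)$ values) that mutated cluster variables are quantum minors on the nose. Invoking \cite{GL} to get that identification inside a proof of Theorem~\ref{thm:quasicomm} inverts the paper's logic and imports precisely the kind of heavy input the authors are trying to avoid; it also forces you into the delicate bookkeeping you flag about \cite{GL}'s graded modification of $\CC_q[\Grr]$. So while your scheme could in principle be made to work as an independent argument, it is strictly dominated by the direct computation, which also yields the sharper statement $\kappa(M_I,M_J)=|J'|$, $\kappa(M_J,M_I)=|J''|$ (used later, e.g.\ in Remark~\ref{rem:maxdiag}), not just their difference.
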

As a consequence of this and combinatorial results of \cite{OPS}, there is a quantum cluster algebra
$C_q(\Grr)$, containing cluster variables $X_I$ for all $\mm$-subsets $I$ of $\{1,\ldots,\nn\}$, 
whose quasi-commutation rules match those of the quantum minors $\qminor{I}$.
By showing further that the appropriate quantum exchange relations are precisely quantum Pl\"ucker relations 
(Proposition~\ref{prop:newcv=minor}) 
we can deduce that $C_q(\Grr)$ contains the quantum coordinate ring $\CC_q[\Grr]$
in a way that identifies $X_I$ with $\qminor{I}$, for all $I$.

One would very much expect that these two algebras coincide in general, 
but this is known only when the cluster structure has finite type \cite{GLfin}.
What can be proved is that they conicide \emph{generically}, in following sense.
\begin{theorem}[Theorem~\ref{thm:generic-isom}]
\label{thm:geniso-intro}
There is an isomorphism 
\[
 \CC_q[\Grr]\otimes_{\basering} \genfld 
 \simeq C_q(\Grr) \otimes_{\basering}\genfld.
\]
\end{theorem}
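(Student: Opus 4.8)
The plan is to recognise both sides of the claimed isomorphism as quantum cluster algebras sharing a common initial quantum seed, and then to deduce that they agree; the base change to the coefficient field $\genfld$ is what lets us reconcile $\CC_q[\Grass{m}{n}]$ with the precise algebra treated by Grabowski--Launois.

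One inclusion is already in hand: by Proposition~\ref{prop:newcv=minor} the initial quantum exchange relations of $C_q(\Grass{m}{n})$ are quantum Pl\"ucker relations, and by Theorem~\ref{thm:quasicomm} the cluster variables $X_I$ quasi-commute exactly as the $\qminor I$ do, so (as explained above) $\CC_q[\Grass{m}{n}]$ embeds into $C_q(\Grass{m}{n})$ via $\qminor I\mapsto X_I$. Since $\genfld$ is a localisation of $\basering$, hence flat over it, this inclusion stays injective after $-\otimes_{\basering}\genfld$, so it remains only to prove the reverse inclusion $C_q(\Grass{m}{n})\otimes_{\basering}\genfld\subseteq\CC_q[\Grass{m}{n}]\otimes_{\basering}\genfld$.

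For this I would appeal to Grabowski--Launois \cite{GL}: $\CC_q[\Grass{m}{n}]$, or the small modification of it for which their result is stated, is itself a quantum cluster algebra, and its initial quantum seed is the standard Grassmannian one --- initial cluster variables the quantum minors $\qminor I$ for $I$ running over a Postnikov labelling, exchange matrix the associated quiver, and $L$-matrix the Leclerc--Zelevinsky quasi-commutation matrix $\bigl(c(I,J)\bigr)$ of those minors. On our side $C_q(\Grass{m}{n})$ is by construction the quantum cluster algebra of the compatible pair $(B(T),L(T))$, where $T$ is the cluster tilting object with rank one summands $M_I$ over the same labelling; here $B(T)$ is the same quiver by \cite{JKS}, and, because all the initial labels are pairwise non-crossing, Theorem~\ref{thm:quasicomm} gives $\lambda_{IJ}=\kappa(M_J,M_I)-\kappa(M_I,M_J)=c(I,J)$, so $L(T)$ coincides with the Grabowski--Launois $L$-matrix (up to the harmless bookkeeping of $q$ versus $\vble$, the transpose/sign conventions for $B$, and the normalisation of $L$). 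Hence the two quantum cluster algebras have the same initial quantum seed inside a common ambient skew field $\cF$, so they coincide as subalgebras of $\cF$; passing to $\genfld$ then also absorbs whatever scalar discrepancy separates $\CC_q[\Grass{m}{n}]$ from the modification of it used in \cite{GL}, which yields the asserted isomorphism.

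The principal obstacle is exactly this final reconciliation: one must pin down what the ``small modification'' in \cite{GL} actually is --- a localisation, a change of ground ring, or a rescaling of generators by units of $\genfld$ --- and check that it becomes trivial after $-\otimes_{\basering}\genfld$; alongside this there is the routine but genuinely necessary task of matching the categorical seed $(B(T),L(T))$ with the Grabowski--Launois seed term by term (quiver orientation, the sign and normalisation of $L$, the chosen square root of $q$) and of verifying that all of the algebras under discussion are being compared inside one fixed skew field.
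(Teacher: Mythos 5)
Your proposal takes a genuinely different route from the paper, and it contains a gap that you yourself flag as ``the principal obstacle'' but do not resolve.

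The paper's proof avoids Grabowski--Launois entirely and instead argues by dimension counting via flat deformation. The key steps are: (1) $\CC_q[\Grass{m}{n}]$ is a flat deformation of $\CC[\Grass{m}{n}]$ over $\basering$, with graded pieces that are free of finite rank equal to the corresponding classical dimensions (Theorem~\ref{thm:defgr}, using the Kelly--Lenagan--Rigal basis); (2) $\CC[\Grass{m}{n}]$ coincides with both the classical cluster algebra $C(X_\maxNC,B_\maxNC)$ and its upper cluster algebra (Scott's theorem); (3) the Geiss--Leclerc--Schr\"oer flat deformation theorem (Theorem~\ref{thm:GLSflatdef} and Remark~\ref{rem:GLSflatdef}) then applies and shows $\grQCA$ is \emph{also} a flat deformation of $\CC[\Grass{m}{n}]$, with graded pieces of the same finite rank. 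Now one already has the inclusion $\CC_q[\Grass{m}{n}]\subseteq\grQCA$ of graded $\basering$-algebras from \eqref{eq:gr-inc}; since both sides have finite, equal rank in each degree, tensoring with the field $\genfld$ turns the degree-wise inclusion of free modules of the same rank into an isomorphism of vector spaces, and hence an isomorphism of algebras. This sidesteps any need to identify quantum seeds on both sides, and any need to analyse the precise relationship between $\CC_q[\Grass{m}{n}]$ and the algebra actually proved in \cite{GL} to be a quantum cluster algebra. Indeed, the paper explicitly states that your route ``is effectively what was already proved in \cite{GL}'' and that the paper ``reproves it here in a more direct way''.

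The gap in your version is exactly the one you name at the end. The Grabowski--Launois result concerns a cocycle twist and rescaling of $\CC_q[\Grass{m}{n}]$, involving roots of $q$ beyond $\vble$ (roughly an $n$-th root, via the determinant grading), and it is not at all automatic that tensoring with $\genfld=\CC(\vble)$ trivialises this modification --- that field does not even contain $q^{1/n}$ for general $n$. So the step ``passing to $\genfld$ then also absorbs whatever scalar discrepancy separates $\CC_q[\Grass{m}{n}]$ from the modification of it used in \cite{GL}'' is precisely the nontrivial content being asserted, and you do not supply an argument for it. You would also need to be careful about the subsidiary point you raise, namely that two quantum cluster algebras with ``the same'' initial seed coincide only after a concrete identification of ambient skew fields, which requires explicit work. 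The paper's dimension-counting argument is cleaner precisely because it proves the reverse inclusion over $\genfld$ without ever invoking the GL structure theorem or reconciling its modification.
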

This result is effectively what was proved in \cite{GL}, 
but we reprove it here in a more direct way, 
by applying the method of \cite{GLS13} in the homogeneous situation.

As a final note, we observe that our new invariant, in the case of rank one modules $M_I, M_J$, 
is equal to a known combinatorial invariant of
corresponding Young diagrams $\partn{I}$ and $\partn{J}$ 
(see \S\ref{sec:maxdiag} for details), namely
\begin{equation*}
  \kappa(M_I, M_J)=\maxdiag(\partn{I} \setminus \partn{J}).
\end{equation*}
The work of Rietsch-Williams \cite{RW17} on mirror symmetry for Grassmannians
shows that the invariant $\maxdiag$ appears when computing the 
Newton-Okounkov body associated to a valuation 
for a \emph{network torus} chart in the Grassmannian.
More precisely, expressing a Pl{\"u}cker coordinate $\minor{J}$ in these torus coordinates
gives a positive polynomial which can be described combinatorially 
using weights of flows on a network. 
The leading term in this \emph{flow polynomial} is determined by the quantities 
$\maxdiag(\partn{I} \setminus \partn{J})$,
for $I$ in a maximal non-crossing set of labels (see \cite[Thm.~1.2]{RW17}).

As observed in \cite[Remark~15.2]{RW17}, 
the invariant $\maxdiag$ also appears in another mirror symmetry context, namely the
work of Fulton-Woodward \cite{FW}, 
where it is used to compute the leading term in the product of two Schubert classes in
the quantum cohomology ring of the Grassmannian.

Combined with our results, it is becoming apparent that $\CM(A)$
plays a role in both the quantisation of and mirror symmetry for Grassmannians. 
The focus of this paper is on quantisation, 
while the connections to mirror symmetry will be the topic of a subsequent paper. 

The structure of this paper is as follows. In Sections~\ref{sec:qca}, 
\ref{sec:qgr} and \ref{sec:gcc}, 
we recall relevant background about quantum cluster algebras, quantum Grassmannians and 
the Grassmannian cluster category $\CM(A)$.
In Section~\ref{sec:maincon}, we introduce the new invariant $\kappa(M,N)$
and discuss various interpretations of it.
In Section~\ref{sec:seeds}, we use $\kappa(M, N)$ to construct quantum seed data from a cluster tilting object in $\CM(A)$.
We also show that this data correctly reproduces the quasi-commutation rules for quantum minors.
In Section~\ref{sec:maxdiag}, we explain the link with Rietsch-Williams' work on Grassmannian mirror symmetry.
In Section~\ref{sec:grCA}, we recall how the homogeneous coordinate ring of the Grassmannian becomes a cluster algebra
and, in Section~\ref{sec:grQCA}, we explain how far the arguments carry over to the quantum Grassmannian.
 
\section{Quantum cluster algebras} \label{sec:qca}

We recall those aspects of the definition of a quantum cluster algebra 
which are relevant for this paper. 
For a complete discussion, we
refer to the original work of Berenstein and Zelevinsky \cite{BZ}.
Let $\mat{B}=(b_{ij})$ be an $\cn\times \cm$ integer matrix, with $\cn\geq \cm$, and 
let $\mat{L}=(\lambda_{ij})$ be a skew-symmetric $\cn\times \cn$ integer matrix. 

\begin{remark}\label{rem:nondegen}
Note that these general cluster parameters $\cm,\cn$ should not be confused with the Grassmannian parameters $\mm,\nn$.
In fact, for the cluster algebra $\CC[\Grr]$, we have  
$\cn=\mm(\nn-\mm)+1$ and $\cn-\cm=\nn$.
Note also that, in order to have a non-trivial exchange matrix $\mat{B}$, i.e.~$\cm\geq 1$, 
and thus mutable cluster variables, we need to have $1<\mm<\nn-1$ and hence $\nn\geq 4$.
\end{remark}

\begin{definition}
We say that $\mat{B}$ and $\mat{L}$ are \emph{compatible} if 
\begin{equation}
\label{eq:compat}
\sum_{j=1}^{\cn} b_{jk}\lambda_{j\ell}=\delta_{k\ell}d_k
\end{equation}
for some positive integers $d_k$. 
In other words, the $\cm\times \cn$ matrix $\mat{B}\transp \mat{L}$ consists of two blocks: 
an $\cm\times \cm$ diagonal matrix $\mat{D}$
with diagonal entries $d_1, \dots, d_\cm$ followed by an $\cm\times (\cn-\cm)$ zero matrix. 
\end{definition}

The submatrix of $\mat{B}$ consisting of the first $\cm$ rows is called the \emph{principal part} of $\mat{B}$.
Note that, if a pair $(\mat{B}, \mat{L})$ is compatible, 
then $\mat{B}$ has full rank $m$ and its principal part is skew-symmetrizable \cite[Prop.~3.3]{BZ}. 

Given a compatible pair $(\mat{B}, \mat{L})$, choose $k$ in the \emph{mutable range} $1\leq k\leq \cm$ and 
define $\cn\times \cn$ and $\cm\times \cm$ matrices
$\mat{E}=(e_{ij})$ and $\mat{F}=(f_{ij})$ by 
\[
e_{ij}=
\begin{cases}
\delta_{ij} & \text{if $j\neq k$,}\\
-1 & \text{if $j=k=i$,}\\
\brakp{b_{ij}} & \text{if $j=k\neq i$,}
\end{cases}
\qquad
f_{ij}=
\begin{cases}
\delta_{ij} & \text{if $i\neq k$,}\\
-1 & \text{if $i=k=j$,}\\
\brakp{-b_{ij}} & \text{if $i=k\neq j$,}
\end{cases}
\]
where $\brakp{x}=\max\{0, x\}$.
Then the mutated matrices are defined to be
\begin{equation}
\label{eq:BLmutation}
\mu_k(\mat{L}) = \mat{E}\transp \mat{L} \mat{E}, \qquad \mu_k(\mat{B}) = \mat{E}\transp \mat{B} \mat{F}.
\end{equation}
The new pair $\mu_k(\mat{B}, \mat{L})$ is compatible \cite[Prop.~3.4]{BZ} 
and $\mu_k$ is an involution \cite[Prop.~3.6]{BZ}, that is,
$\mu_k\mu_k(\mat{B}, \mat{L}) = (\mat{B}, \mat{L})$.

Let $q$ be a formal variable, with a chosen square root $\vble$.
For brevity we write $\basering$ for the Laurent polynomial ring $\CC[q^{1/2},q^{-1/2}]$.

\begin{definition}\label{def:qtorus} 
The \emph{based quantum torus} $\cT(\mat{L})$ is the $\basering$-algebra 
generated by formal variables $X_1,\dots, X_\cn$ 
and their inverses $X_1^{-1},\dots, X_{\cn}^{-1}$, subject to the quasi-commutation relations 
\begin{equation}
\label{eq:qcomm}
X_iX_j=q^{\lambda_{ij}} X_jX_i.
\end{equation}
\end{definition}

For $\a=(a_i)\in \ZZ^{\cm}$, let 
\begin{equation}
\label{eq:qu-monom}
X^{\a}=q^{\gamma(\a)}X_1^{a_1}\dots X_{\cn}^{a_\cn}.
\end{equation}
where $\gamma(\a) = \sfrac12 \sum_{i>j}a_ia_j\lambda_{ij}$.
Then $\{X^{\a} \mid \a \in \ZZ^\cn\}$ is a $\basering$-basis of $\cT(\mat{L})$ and for $\a, \b\in \ZZ^\cn$, we have 
\[
X^{\a}X^{\b}=q^{ \sfrac12\sum_{i>j}(a_ib_j-b_ia_j)\lambda_{ij}}X^{\a+\b}
=q^{\sum_{i>j}(a_ib_j-b_ia_j)\lambda_{ij}}X^{\b}X^{\a}.
\]
Note that  $\cT(\mat{L})$ is an Ore domain \cite[Prop.~A.1]{BZ}, 
i.e.~it  can be embedded in its skew field of fractions $\cF(\mat{L})$. 

The datum $\Phi=( \{X_1, \dots, X_\cn\}, \mat{B}, \mat{L})$ forms what is called a 
\emph{quantum seed} (see \cite[Def.~4.5]{BZ} for a complete definition) and
the (indexed) set  $\{X_1, \dots, X_\cn\}$ is called a \emph{cluster}.
The mutation of pairs $(\mat{B},\mat{L})$ described above extends to an involutive operation on 
quantum seeds as follows.

For any $k$ in the mutable range $1\leq k\leq \cm$, define a new cluster variable
\begin{equation}
\label{eq:qu-exch}
X_k^*= X^{\a'}+X^{\a''},
\end{equation} 
where
\[
a_j'= \begin{cases}
  -1 & \text{if $j=k$,}\\
 \brakp{-b_{jk}} & \text{if $j\not=k$,}
\end{cases}
\quad\text{and}\quad
a_j''= \begin{cases}
-1 & \text{if $j=k$,}\\
 \brakp{b_{jk}} & \text{if $j\not=k$.}
\end{cases}
\]

\begin{remark}
\label{rem:compat}
Note that when $\ell\neq k$, we can rewrite \eqref{eq:compat} as
\begin{equation}
\label{eq:compat2}
\sum_{j:b_{jk}<0} (-b_{jk}) \lambda_{j\ell} =  \sum_{j:b_{jk}>0} b_{jk} \lambda_{j\ell} 
\end{equation}
This is precisely the condition that $X^{\a'}$ and $X^{\a''}$ have the same quasi-commutation rule with $X_\ell$,
and hence that $X_k^*$ quasi-commutes with $X_\ell$.
Indeed,
\[
 X_k^* X_\ell = q^{\lambda^*_{k\ell}} X_\ell X_k^* ,
 \]
where $\lambda^*_{k\ell}+\lambda_{k\ell}$ is the common value in \eqref{eq:compat2}.
Comparing this to \eqref{eq:BLmutation}, we see that $\mu_k(\mat{L})$ is precisely the matrix
$\mat{L}^*=(\lambda^*_{ij})$.
\end{remark}

Denote by $\mu_k \{X_1, \dots, X_\cn\}$ the set $\{X_1, \dots, X^*_k, \dots, X_\cn\}$, 
i.e.~$X_k$ is replaced by $X^*_k$ in the cluster $\{X_1, \dots, X_\cn\}$.  
The datum $\mu_k(\Phi)=\bigl( \mu_k\{X_1, \dots, X_\cn\}, \mu_k(\mat{B}, \mat{L}) \bigr)$ is 
called the \emph{mutation} of $\Phi$ in direction $k$,
and is again a quantum seed \cite[Prop.~4.7]{BZ} and 
thus $\mu_k\{X_1, \dots, X_\cn\}$ is another cluster in $\cF (\mat{L})$. 

Note that the initial variables $X_{\cm+1}, \dots, X_\cn$ are never mutated and appear in every cluster; they are called \emph{frozen (cluster) variables}.

\begin{definition}
\label{def:qca}
The quantum cluster algebra $C_q(\Phi)$ is the $\basering$-subalgebra of the skew field $\cF(\mat{L})$ generated by all the cluster variables appearing in all quantum seeds obtained from $\Phi$ by all possible sequences of mutations.
\end{definition}

Note that, by the quantum Laurent phenomenon \cite[\S5]{BZ}, we actually have $C_q(\Phi)\subseteq \cT(\mat{L})$.
Since $\cT(\mat{L})$ is a free $\basering$-module, $C_q(\Phi)$ is torsion free and therefore 
free, because $\basering$ is a principal ideal domain.

Recently, Geiss, Leclerc and Schr\"o{er} proved that, under suitable assumptions, the specialisation of $C_q(\Phi)$ at 
$\vble=1$ is the corresponding classical cluster algebra $C(X, \mat{B})$,
where $X$ stands for the cluster $\{X_1, \dots, X_\cn\}$
and $(X,\mat{B})$ is a classical seed.
Note that this is not an obvious result and is not just the statement that setting $\vble=1$ in the construction of $C_q(\Phi)$
gives $C(X, \mat{B})$. 

Recall from \cite{GLS18} that a $\ZZ$-graded cluster algebra is a $\ZZ$-graded algebra 
such that all the cluster variables are homogeneous.

\begin{theorem} \cite{GLS18} \label{thm:GLSflatdef}
Suppose that $C(X, \mat{B})$ is a $\ZZ$-graded cluster algebra with finite dimensional homogeneous components
and that $C(X, \mat{B})$ coincides with the corresponding upper cluster algebra $U(X, \mat{B})$. 
Then, for a quantum seed $\Phi=(X,\mat{B},\mat{L})$, we have
\[
  C_q(\Phi)\otimes_{\basering} \CC=C(X, \mat{B}),
\]
where $\CC$ here stands for the $\basering$-module $\basering / (\vble -1)$. 
\end{theorem}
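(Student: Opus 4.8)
The plan is to realise $C(X,B)$ as the image of a natural specialisation map and then to prove that this map is injective, thereby identifying $C_q(\Phi)\otimes_{\basering}\CC$ with $C(X,B)$. Since $C_q(\Phi)\sub\cT(L)$ by the quantum Laurent phenomenon and $\cT(L)\otimes_{\basering}\CC$ is the commutative Laurent ring $\CC[X_1^{\pm},\dots,X_n^{\pm}]$ (the quasi-commutation relations becoming commutativity at $\vble=1$), reduction modulo $(\vble-1)$ induces a homomorphism $\sigma\colon C_q(\Phi)\otimes_{\basering}\CC\to\CC[X_1^{\pm},\dots,X_n^{\pm}]$. First I would compute $\img\sigma$. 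On the one hand, the quantum mutation rules \eqref{eq:BLmutation} and \eqref{eq:qu-exch} reduce modulo $(\vble-1)$ to the classical ones, so $\sigma$ carries each quantum cluster variable, from any seed reachable from $\Phi$, to the corresponding classical cluster variable; hence $C(X,B)\sub\img\sigma$. On the other hand, again by the quantum Laurent phenomenon $C_q(\Phi)\sub\cT(L_{\Sigma})$ for every quantum seed $\Sigma$ reachable from $\Phi$, so $\img\sigma\sub\CC[X_{\Sigma}^{\pm}]$ for the corresponding classical seed, and therefore $\img\sigma\sub U(X,B)$. This is the only point at which the hypothesis $C(X,B)=U(X,B)$ enters, and it forces $\img\sigma=C(X,B)$.

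It then remains to prove that $\sigma$ is injective, equivalently that $\cT(L)/C_q(\Phi)$ has no $(\vble-1)$-torsion (recall $\basering$ is a principal ideal domain and $\CC=\basering/(\vble-1)$). Here I would use the grading: giving $X_i$ the degree of the $i$th classical cluster variable makes $C_q(\Phi)$ a graded $\basering$-subalgebra of $\cT(L)$, with torsion-free, hence free, homogeneous components $C_q(\Phi)_d$; granting for now that each has finite rank $r_d$ over $\basering$, it suffices to show $r_d=\dim_{\CC} C(X,B)_d$ for every $d$. Surjectivity of the degree-$d$ part $C_q(\Phi)_d\otimes_{\basering}\CC\to C(X,B)_d$ of $\sigma$ gives $r_d\geq\dim_{\CC} C(X,B)_d$ at once; and since $r_d=\dim_{\genfld}\bigl(C_q(\Phi)\otimes_{\basering}\genfld\bigr)_d$, the reverse inequality says that the homogeneous pieces of the generic quantum cluster algebra are no larger than those of the classical upper cluster algebra $U(X,B)=C(X,B)$. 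I would try to establish this by reducing, in a fixed degree $d$ and using the finite-dimensionality hypothesis, to a comparison over a common finite set $S$ of seeds, and then to the inequality
\begin{align*}
 \dim_{\genfld} \bigcap_{\Sigma\in S}\bigl(\cT(L_{\Sigma})_d\otimes_{\basering}\genfld\bigr)
 &\leq \dim_{\CC} \bigcap_{\Sigma\in S}\CC[X_{\Sigma}^{\pm}]_d \\
 &= \dim_{\CC} C(X,B)_d ,
\end{align*}
an upper-semicontinuity (flat-specialisation) statement, over the discrete valuation ring $\mathcal{O}$ obtained by localising $\basering$ at $(\vble-1)$, for a finite intersection of lattices arising from the quantum tori of the seeds in $S$.

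The main obstacle is precisely this last step, together with the finiteness of the ranks $r_d$ on which it rests: one must control the a priori infinite intersections defining the upper cluster algebra, and then check that passing to the generic quantum torus does not enlarge the rank of such an intersection beyond its classical value. Making this rigorous calls for embedding all the relevant quantum tori $\cT(L_{\Sigma})$, $\Sigma\in S$, together with their reductions modulo $(\vble-1)$, into a single ambient free $\mathcal{O}$-module with compatible specialisations, and then a saturation argument showing that an $\mathcal{O}$-basis of the quantum intersection reduces modulo $(\vble-1)$ to a $\CC$-linearly independent subset of the classical intersection. This delicate analysis is what \cite{GLS18} carries out; everything else is the straightforward sandwich argument above.
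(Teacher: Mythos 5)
The paper does not prove this theorem; it is quoted verbatim as a citation of [GLS18], so there is no internal proof to compare against. Your sketch does follow the strategy of [GLS18]: sandwich $\img\sigma$ between $C(X,B)$ (via compatibility of quantum and classical mutation) and $U(X,B)$ (via the quantum Laurent phenomenon applied seed by seed), then force injectivity of $\sigma$ by a degree-by-degree rank comparison using $U(X,B)=C(X,B)$. You have correctly isolated where each hypothesis enters.

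That said, the proposal does not actually establish the result: the whole content of the injectivity step --- existence of a common ambient lattice over the DVR $\mathcal{O}$, saturation of the finite intersection $\bigcap_{\Sigma\in S}\cT(L_\Sigma)$, and the semicontinuity estimate these underwrite --- is identified and then explicitly deferred to [GLS18], so what you have is an outline, not a proof. Two further points worth tightening if you want the outline to stand on its own. First, the step $\img\sigma\subseteq\CC[X_\Sigma^{\pm}]$ needs a short argument that the reductions $\cT(L_\Sigma)\otimes_{\basering}\CC\to\CC[X_\Sigma^{\pm}]$ all restrict to the \emph{same} map $\sigma$ on $C_q(\Phi)\otimes_{\basering}\CC$; since $\otimes_{\basering}\CC$ is not exact this is not automatic, though it follows because they agree on the generating quantum cluster variables. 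Second, the passage to a finite seed set $S$ is better obtained from the Berenstein--Fomin--Zelevinsky theorem that $U(X,B)$ is already the intersection over the initial seed and its $m$ one-step mutations (applicable here since compatibility with $L$ forces $B$ to have full rank), rather than from finite-dimensionality of $C(X,B)_d$; the finite-dimensionality hypothesis is what makes the degree-by-degree rank comparison meaningful and gives $r_d<\infty$ a priori.
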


\begin{remark}\label{rem:GLSflatdef}
Thus, under the assumptions of the theorem, $C_q(\Phi)$ is a flat deformation of $C(X, \mat{B})$.
Furthermore, $C_q(\Phi)$ will also be graded and each graded piece is a free $\basering$-module of finite rank, equal to the dimension of the corresponding graded piece of $C(X, \mat{B})$.
\end{remark}

\section{The Quantum Grassmannian} \label{sec:qgr}

In this section, we work over the deformation ground ring $\qbasering$ for simplicity, 
but we will later (in \S8) extend scalars to $\basering$, 
in order to make comparisons with cluster algebras. 

Denote by $\CC_q[M_{\mm\times \nn}]$ the \emph{quantum matrix algebra}, 
which is a $q$-deformation of the coordinate ring of the affine variety 
$M_{\mm\times \nn}$ of $\mm\times \nn$ complex matrices.
This is the graded $\qbasering$-algebra generated by degree $1$ variables $x_{ij}$, 
for $1\leq i \leq \mm$ and $1\leq j \leq \nn$, subject to the following homogeneous relations (cf.~\cite{FRT}):
\[
x_{ij}x_{st}=\begin{cases}
qx_{st}x_{ij} & \text{if $i=s$ and $j<t$;}\\
qx_{st}x_{ij} & \text{if $i<s$ and $j=t$;}\\
x_{st}x_{ij} & \text{if $i<s$ and $j>t$;}\\
x_{st}x_{ij}+(q-q^{-1})x_{sj}x_{it} & \text{if $i<s$ and $j<t$.}
\end{cases} 
\]
This algebra is free as a $\qbasering$-module, with finite rank graded pieces. 
Indeed, it has a basis consisting of reverse-lexicographically ordered monomials
and the above relations provide a straightening law. 
Note that, unlike the coordinate ring on the variety of matrices, the quantum matrix algebra
depends on the choice of a total order on the rows and columns. 

Any $I\subseteq \{1, \dots, \nn\}$ with $|I|=\mm$ determines an $\mm\times \mm$ quantum minor 
\[
\qminor{I}=\sum_{\sigma\in S_\mm}(-q)^{l(\sigma)} x_{1i_{\sigma(1)}}\dots x_{\mm i_{\sigma(\mm)}},
\]
where $S_\mm$ is the symmetric group of $\{1, \dots, \mm\}$ and $l(\sigma)$ is the length of $\sigma\in S_\mm$. 
Setting $q=1$, these become the ordinary minors that generate the coordinate ring $\CC[\Grr]$ 
of the Grassmannian $\Grass{\mm}{\nn}$ of $\mm$-dimensional quotients of $\CC^\nn$.

\begin{definition}\cite{LL}
\label{def:qgrass}
The coordinate ring $\CC_q[\Grr]$  of the \emph{quantum Grassmanniann} 
is the $\qbasering$-subalgebra of $\CC_q[M_{\mm\times \nn}]$ 
generated by all $\mm\times \mm$ quantum minors.
Thus it has an induced (and scaled) grading in which each quantum minor has degree~1.
\end{definition}

Kelly, Lenagan and Rigal  \cite[Thm 2.7]{KLR} constructed a homogeneous basis for $\CC_q[\Grr]$ 
which specialises to a standard basis of $\CC[\Grr]$ on setting $q=1$.
As an immediate consequence we have the following.
\begin{theorem} \label{thm:defgr}
The quantum Grassmannian $\CC_q[\Grr]$ is a flat deformation of the classical coordinate ring $\CC[\Grr]$, 
which is the specialisation at $q=1$.
Each graded piece is a free $\qbasering$-module of finite rank, equal to the dimension of the corresponding graded piece of $\CC[\Grr]$.
\end{theorem}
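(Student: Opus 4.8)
The plan is to deduce the statement directly from the homogeneous basis of Kelly, Lenagan and Rigal quoted above. First I would note that, by Definition~\ref{def:qgrass}, $\CC_q[\Grass{m}{n}]$ is the $\qbasering$-subalgebra of $\CC_q[M_{m\times n}]$ generated by the quantum minors $\qminor{I}$, each of degree~$1$, so the induced (scaled) grading writes it as $\bigoplus_{d\geq 0}\CC_q[\Grass{m}{n}]_d$. The cited result produces a $\qbasering$-basis of $\CC_q[\Grass{m}{n}]$ consisting of homogeneous elements; taking the part in degree~$d$ gives a $\qbasering$-basis of $\CC_q[\Grass{m}{n}]_d$, so every graded piece is a free $\qbasering$-module, and $\CC_q[\Grass{m}{n}]$ itself is free, hence flat, over $\qbasering$.

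Next I would identify the specialisation at $q=1$. Let $\CC$ here stand for the $\qbasering$-module $\qbasering/(q-1)$, and consider $\CC_q[\Grass{m}{n}]\otimes_{\qbasering}\CC$. Since the functor $(-)\otimes_{\qbasering}\CC$ commutes with direct sums, this is graded with $d$-th piece a $\CC$-vector space having as basis the image of the degree-$d$ part of the chosen homogeneous basis. The defining relations of $\CC_q[M_{m\times n}]$ become the usual commutativity relations at $q=1$, and $\qminor{I}$ specialises to the ordinary minor $\minor{I}$, so the algebra map $\CC_q[\Grass{m}{n}]\otimes_{\qbasering}\CC \to \CC[M_{m\times n}]$ has image the subalgebra generated by the $\minor{I}$, namely $\CC[\Grass{m}{n}]$. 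The statement that the homogeneous basis specialises to a standard basis of $\CC[\Grass{m}{n}]$ is exactly the statement that this map carries a $\CC$-basis to a linearly independent set, so it is injective, and therefore an isomorphism $\CC_q[\Grass{m}{n}]\otimes_{\qbasering}\CC \cong \CC[\Grass{m}{n}]$ of graded algebras.

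Combining the two, $\CC_q[\Grass{m}{n}]$ is a flat (indeed free) $\qbasering$-algebra whose fibre at $q=1$ is $\CC[\Grass{m}{n}]$, which is precisely the assertion that it is a flat deformation. Moreover the rank of $\CC_q[\Grass{m}{n}]_d$ equals $\dim_{\CC}\CC[\Grass{m}{n}]_d$, which is finite because the latter is a graded piece of the homogeneous coordinate ring of a projective variety.

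There is really no obstacle to overcome: the theorem is an immediate consequence of the KLR basis, as claimed. The only point that needs genuine care — and it is the point that the cited result actually supplies — is the verification that the $q=1$ fibre of $\CC_q[\Grass{m}{n}]$, a priori merely some subalgebra of $\CC[M_{m\times n}]$, coincides on the nose with the classical Grassmannian coordinate ring rather than being a proper sub- or super-algebra; once the specialisation of the homogeneous basis is invoked, everything else is bookkeeping with gradings and base change along $\qbasering\to\CC$.
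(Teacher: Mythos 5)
Your argument is correct and is exactly the route the paper takes: the paper simply records Theorem~\ref{thm:defgr} as an immediate consequence of the KLR homogeneous basis (\cite[Thm 2.7]{KLR}) specialising to a standard basis of $\CC[\Grass{m}{n}]$ at $q=1$, and you have merely spelled out the routine bookkeeping (freeness of each graded piece, base change along $\qbasering\to\CC$, identification of the fibre) that the paper leaves implicit.
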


Two quantum minors $\qminor{I}$ and $\qminor{J}$ are said to \emph{quasi-commute} if 
\[
  \qminor{I}\qminor{J}=q^c\qminor{J}\qminor{I},
\]
for some integer $c$. 
Leclerc and Zelevinsky \cite{LZ98} describe the quasi-commuting quantum minors and give a combinatorial 
formula for computing the exponent $c$, as follows. 

\begin{definition}\label{def:non-crossing}
Two $\mm$-element subsets $I$ and $J$ of $\{1, \dots, \nn\}$ are said to be 
\emph{non-crossing} (or \emph{weakly separated)} if 
one of the following two conditions holds: 
\begin{itemize}
\item[(i)] $J\setminus {I}$ can be written as a disjoint union $J'\cup J''$ so that $J'< (I\setminus J) <J''$;
\item[(ii)] $I\setminus {J}$ can be written as a disjoint union $I'\cup I''$ so that $I'< (J\setminus I) <I''$,
\end{itemize}
where $I<J$ means that $i<j$ for all $i\in I$ and $j\in J$.
When $I$ and $J$ are non-crossing, we define 
\[
  c(I,J) = \begin{cases} 
 |J''| - |J'| & \text{in case (i)}\\
 |I'| - |I''| & \text{in case (ii)}
\end{cases}
\]
When both conditions hold, these two formulae give the same answer.
\end{definition}

\begin{theorem}\label{thm:LZ}\cite{LZ98}
Two quantum minors $\qminor{I}$ and $\qminor{J}$ quasi-commute if and only if $I$ and $J$ are non-crossing. 
In this case, the quasi-commutation rule is
\[
 \qminor{I}\qminor{J}=q^{c(I,J)}\qminor{J}\qminor{I}.
\]
\end{theorem}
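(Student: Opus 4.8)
The plan is to prove both implications from the FRT presentation of $\CC_q[M_{m\times n}]$ together with the quantum Pl\"ucker relations and the Kelly--Lenagan--Rigal straightening law underlying Theorem~\ref{thm:defgr}. Note first that each $\qminor{I}$ here is the quantum minor on the \emph{full} row set $\{1,\dots,m\}$ and column set $I$, so the two minors in question always share all their rows; thus the relevant structure is that of the quantum Pl\"ucker coordinates on $\CC_q[\Grass{m}{n}]$. I would also fix the partial order on $m$-subsets by $A\leq B$ if $a_k\leq b_k$ for all $k$ (elements listed increasingly), so that a standard monomial of degree two is a product $\qminor{A}\qminor{B}$ with $A\leq B$, and standard monomials form a $\qbasering$-basis of the relevant graded piece.

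For the ``if'' direction, I would argue by induction on the size $d(I,J)=|I\setminus J|=|J\setminus I|$ of the symmetric difference. The base cases are $d(I,J)\leq 1$ and the ``extreme'' configurations such as $I$ and $J$ disjoint with $I<J$, or one of the two sets lying in an interval of the complement of the other; in these cases the quasi-commutation, with the exponent predicted by Definition~\ref{def:non-crossing}, follows from the known identities for products of quantum minors (the quantum analogues of Muir's law and of the Laplace expansion), where the quadratic correction terms produced by the FRT relations cancel after antisymmetrisation in the rows. For the inductive step, take a non-crossing pair, say with $J\setminus I=J'\sqcup J''$ and $J'<(I\setminus J)<J''$; then one can transfer a single index of $I\triangle J$ between $I$ and $J$ so as to strictly decrease $d(I,J)$ while preserving the non-crossing condition, and a three-term quantum Pl\"ucker relation rewrites $\qminor{I}\qminor{J}$ as a $q$-linear combination of products of pairs with smaller symmetric difference. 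Applying the inductive hypothesis to each such product and collecting the $q$-powers, one verifies $\qminor{I}\qminor{J}=q^{c(I,J)}\qminor{J}\qminor{I}$; pinning down the coefficients in the Pl\"ucker relations and carrying out this exponent bookkeeping is the most delicate part of this direction.

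For the ``only if'' direction, suppose $I$ and $J$ cross, i.e. there is a quadruple $a<b<c<d$ with $\{a,c\}\subseteq I\setminus J$ and $\{b,d\}\subseteq J\setminus I$, or the same with $I$ and $J$ interchanged. I would show that $\qminor{I}\qminor{J}$ is not proportional to $\qminor{J}\qminor{I}$ by expanding both in the straightening basis: after reducing (by localisation, or by multiplying through by auxiliary quantum minors) to a configuration governed by the quadruple $a,b,c,d$, a suitable three-term quantum Pl\"ucker relation exhibits a standard monomial occurring in the expansion of one product with a $q$-coefficient that the other cannot match, so no relation $\qminor{I}\qminor{J}=q^{c}\qminor{J}\qminor{I}$ can hold. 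An alternative is to divide $\qminor{I}\qminor{J}-\qminor{J}\qminor{I}$ by $q-1$ and specialise at $q=1$: quasi-commutation would force the classical Poisson bracket $\{D_I,D_J\}$ to be a scalar multiple of $D_ID_J$, which fails exactly for crossing pairs. A more conceptual route to the whole theorem realises $\qminor{I}$ as a matrix coefficient of the quantum exterior power $\Lambda^m_q$, so that $\qminor{I}\qminor{J}$ is governed by the braiding $\check{R}\colon\Lambda^m_q\otimes\Lambda^m_q\to\Lambda^m_q\otimes\Lambda^m_q$; quasi-commutation is then equivalent to $\check{R}$ acting by a scalar on the line spanned by $e_I\otimes e_J$, and a direct computation of the $\mathfrak{gl}_n$ $R$-matrix on wedge vectors recovers both the non-crossing condition and the value $q^{c(I,J)}$. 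In every approach the main obstacle is the same: the bookkeeping of $q$-powers and signs in the quantum Pl\"ucker relations, together with a careful use of the straightening law (or of the $R$-matrix) to supply the linear independence that drives the converse.
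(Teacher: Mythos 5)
The paper does not give a proof of Theorem~\ref{thm:LZ}: it is stated with the citation \cite{LZ98} and used as a known result, so there is no in-paper argument to compare against.

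Evaluating your proposal on its own terms: the circle of ideas (quantum Pl\"ucker relations and the straightening law for the ``if'' direction; standard-monomial mismatch or specialisation at $q=1$ for the ``only if'' direction; or, more conceptually, the braiding on $\Lambda^m_q\otimes\Lambda^m_q$) is the right one and close in spirit to how Leclerc and Zelevinsky actually proceed. But the hard steps are deferred rather than done. In the ``if'' direction you need the combinatorial fact that for every non-crossing pair with $|I\setminus J|\geq 2$ there is a three-term quantum Pl\"ucker relation whose other two products each have strictly smaller symmetric difference and remain non-crossing, together with the bookkeeping showing that the accumulated $q$-powers collect to exactly $c(I,J)$; you flag this as the most delicate part, but it is really the entire content of this implication and no mechanism for choosing the transferred index is specified. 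In the ``only if'' direction, the claim that a standard monomial occurs in one expansion with a $q$-coefficient the other product cannot match is not exhibited, and the $q\to 1$ alternative needs the classical Poisson fact that $\{\minor{I},\minor{J}\}$ is not a scalar multiple of $\minor{I}\minor{J}$ for crossing pairs, which is asserted rather than established. As written this is a plausible roadmap, not a proof.
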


\begin{remark}\label{rem:cycsym}
Note that, despite the formulation in Definition~\ref{def:non-crossing}, 
the non-crossing condition depends only on the cyclic order the index set $\{1,\ldots,\nn\}$. 
On the other hand, the definitions of $c(I,J)$ and of the quantum Grassmannian 
depend crucially on the choice of total order of the index set,  
as made in the definition of the quantum matrix algebra. 
\end{remark}

\section{The Grassmannian cluster category} \label{sec:gcc}

In this section, we recall from \cite{JKS} how the Grassmannian cluster algebra has a categorification given by the category $\CM(A)$ of Cohen-Macaulay modules for a suitable algebra $A$.

Consider the commutative ring
\begin{equation}\label{eq:Rdef} 
  R=\k[[x,y]]/(x^\mm-y^{\nn-\mm})
\end{equation}
together with the action of $G=\{\zeta\in \k^* : \zeta^\nn=1\}$, via 
\begin{equation}\label{eq:Gaction}
  (x,y)\mapsto (\zeta x, \zeta^{-1} y).
\end{equation}
The category $\md_G(R)$ of finitely generated $G$-equivariant $R$-modules is tautologically equivalent to the 
finitely generated module category $\md(A)$
for the twisted group ring $A=R\tgr G$.
The centre of $A$ is the $G$-invariant subring of $R$, namely
\begin{equation}\label{eq:Zdef} 
Z=\k[[t]],
\end{equation}
where $t=xy$.
Note that $R$ and $A$ are free $Z$-modules of rank $\nn$ and $\nn^2$, respectively.
Furthermore, $R$ is a Gorenstein ring, while $A$ is a non-commutative $Z$-order,
which is also Gorenstein, in the sense of Buchweitz (cf.~\cite[Cor. 3.7]{JKS}).

Exploiting the identification $\md(A)=\md_G(R)$, we define
\begin{equation}\label{eq:CM2}
  \CM(A) = \CM_G(R),
\end{equation}
that is, the category of $G$-equivariant Cohen-Macaulay $R$-modules.
This is a Frobenius category, in the sense of \cite{Hap}, as $R$ is Gorenstein.
Furthermore, it is stably 2-Calabi-Yau,
that is, $\Ext^1(M,N)$ is naturally dual to $\Ext^1(N,M)$,
for all $M,N$ in $\CM(A)$.
As $R$ is a finitely generated $Z$-module and $Z$ is a principal ideal domain, the Cohen-Macaulay $R$-modules
are precisely those which are free over $Z$ and thus $\CM(A)$ may also be described directly as
the category of finitely generated $A$-modules which are free over $Z$.

Another way to describe the algebra $A$ is as a quotient of the 
complete path algebra $\widehat{\k Q}$ of a quiver $Q$, 
which is the doubled quiver of a simple circular graph $C$.
More precisely, let $C=(C_0,C_1)$ be the circular graph with vertex set 
$C_0=\ZZ_\nn=G\dual$ and edge set $C_1=\{1,\ldots ,\nn\}$, with edge $i$
joining vertices $(i-1)$ and $(i)$. 
The associated quiver $Q=Q(C)$ has vertex set $Q_0=C_0$ and
arrows set $Q_1=\{x_a,y_a: a\in C_1\}$ with $x_a\colon (i-1) \to (i)$
and $y_a\colon (i) \to (i-1)$, as illustrated in Figure~\ref{fig:mckayQ} in the case $\nn=5$.

As is familiar from the McKay correspondence, $\k[[x,y]]\tgr G$ is isomorphic to the
complete preprojective algebra of type $\widetilde{A}_{\nn-1}$, that is, the quotient of $\widehat{\k Q}$
by the ideal generated by the $\nn$ relations $xy=yx$, one beginning at each vertex.
If we quotient further by the $\nn$ relations $x^\mm=y^{\nn-\mm}$,
then we obtain $A=R\tgr G$.

The quotient of $A$ by the ideal generated by the idempotent $e_0$ at vertex~$0$ 
is the preprojective algebra $\Pi$ of type $A_{\nn-1}$.  
The proof in \cite{JKS} that $\CM(A)$ categorifies the cluster structure on the Grassmannian 
uses the quotient functor 
\begin{equation}\label{eq:pi-def-old} 
\pi\colon \CM(A) \to \md\Pi,
\end{equation}
whose image is the subcategory $\Sub Q_\mm$ of modules with socle at $\mm$,
and the result of Geiss-Leclerc-Schr\"oer \cite{GLS08} that $\Sub Q_\mm$ gives a categorification for the
open cell in the Grassmannian.  

Note that there is actually nothing special about the vertex 0, as the algebra $A$ has a cyclic symmetry 
induced by cycling the graph $C$. 
Thus any other vertex could have been chosen instead.

\begin{figure}
\begin{tikzpicture}[scale=1,baseline=(bb.base)]  
\coordinate (bb) at (0,0); 
\newcommand{\circradius}{1.5cm}
\newcommand{\inradius}{1.3cm}
\newcommand{\outradius}{1.8cm}
\draw[blue,thick] (0,0) circle(\circradius);
\foreach \j/\v in {1/1,2/2,3/3,4/4,5/0}
{\draw (90-72*\j:\circradius) node[black] {$\bullet$};
 \draw (90-72*\j:\outradius) node[black] {$\v$};
 \draw (126-72*\j:\inradius) node[black] {$\j$}; }
\end{tikzpicture}
\qquad\qquad
\begin{tikzpicture}[scale=1,baseline=(bb.base)]
\coordinate (bb) at (0,0); 
\newcommand{\radius}{1.5cm}
\foreach \j in {1,...,5}{
  \path (90-72*\j:\radius) node[black] (w\j) {$\bullet$};
  \path (162-72*\j:\radius) node[black] (v\j) {};
  \path[->,>=latex] (v\j) edge[blue,bend left=25,thick] node[black,auto] {$x_{\j}$} (w\j);
  \path[->,>=latex] (w\j) edge[blue,bend left=25,thick] node[black,auto] {$y_{\j}$}(v\j);
}
\draw (90:\radius) node[above=3pt] {$0$};
\draw (162:\radius) node[above left] {$4$};
\draw (234:\radius) node[below left] {$3$};
\draw (306:\radius) node[below right] {$2$};
\draw (18:\radius) node[above right] {$1$};
\end{tikzpicture}
\caption{The circular graph $C$ and double quiver $Q(C)$}
\label{fig:mckayQ}
\end{figure}

For any $A$-module $M$ we can define its \emph{rank} (see \cite[Def.~3.5]{JKS})
\begin{equation}\label{eq:rank}
 \rk(M) = \len_{A\otimes_Z K}\bigl( M\otimes_Z K \bigr), 
\end{equation}
where $K$ is the field of fractions of $Z$ and we note that $A\otimes_Z K\isom M_n\bigl( K\bigr)$.
For any $M$ in $\CM(A)$ and every $j\in Q_0$, we have
\[
  \rk_Z(e_j M) = \rk(M).
\]
In other words, every such $M$ may be regarded as a representation of the quiver $Q$, 
with a free $Z$-module of rank $\rk(M)$ at each vertex and satisfying the relations $xy=t=yx$ and 
$x^\mm=y^{\nn-\mm}$.

In particular, it is possible \cite[Prop 5.2]{JKS} to classify the rank one modules in $\CM(A)$,
as follows.
For any $\mm$-subset $I\sub C_1$, define $\rkone{I}$ in $\CM(A)$ as follows.
For $j\in Q_0$, set $e_j\rkone{I}=Z$ and, for $a\in C_1$, set 
\begin{align*}
 & \text{ $x_a\colon Z\to Z$ to be (multiplication by) $1$, if $a\in I$, or $t$, if $a\not\in I$,}\\
 & \text{ $y_a\colon Z\to Z$ to be (multiplication by) $t$, if $a\in I$, or $1$, if $a\not\in I$.}
\end{align*}
Thus the rank one modules are in canonical one-one correspondence with the Pl\"ucker coordinates
$\minor{I}$ in $\CC[\Grr]$ and indeed these are the cluster characters of the corresponding modules
\cite[\S9]{JKS}.

A key point of the categorification is the following result \cite[Prop 5.6]{JKS}

\begin{proposition}\label{prop:nc}
Let $I,J$ be $\mm$-subsets of $C_1$.
Then $\Ext^1_A(\rkone{I},\rkone{J})=0$ if and only if $I$ and $J$ are non-crossing.
\end{proposition}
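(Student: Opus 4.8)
The plan is to prove Proposition~\ref{prop:nc} by computing $\Ext^1(\rkone{I},\rkone{J})$ directly from the explicit description of the rank one modules as representations of the quiver $Q$, and then matching the vanishing condition against the combinatorial definition of non-crossing. First I would observe that, since $\CM(A)$ is a Frobenius category whose projective-injectives are the summands of $A$ itself, it suffices to compute $\Ext^1$ in the ambient module category $\md(A)$, or equivalently via a short exact sequence: take the obvious surjection from a projective (or, better, directly build a ``minimal'' conflation) $0 \to \rkone{J} \to E \to \rkone{I} \to 0$ and count how many non-isomorphic middle terms $E$ there are. Concretely, an extension of $\rkone{I}$ by $\rkone{J}$ is a representation of $Q$ with a rank $2$ free $Z$-module at each vertex $j \in Q_0$, where the maps $x_a, y_a$ are $2\times 2$ matrices over $Z$ in block upper-triangular form with diagonal blocks dictated by $\rkone{I}$ (bottom) and $\rkone{J}$ (top), subject to the relations $x_a y_a = t = y_a x_a$ and $x^k = y^{n-k}$. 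The off-diagonal entries, modulo coboundaries, give a cocycle whose class is the extension class; the task is to determine for which pairs $I, J$ every such cocycle is a coboundary.

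The key computational step is to set this up as an explicit linear-algebra problem over $Z = \CC[[t]]$. Going around the circle $C_1 = \{1,\dots,n\}$, the diagonal action of $x_a$ is multiplication by $1$ if $a \in I$ and by $t$ if $a \notin I$ (on the $\rkone{I}$-component) and similarly with $I$ replaced by $J$ on the other component; the off-diagonal component of $x_a$ (respectively $y_a$) is an element $f_a \in Z$ (respectively $g_a \in Z$), and the relation $x_a y_a = y_a x_a$ forces a compatibility between $f_a$ and $g_a$ at each edge, while the long relations $x^k = y^{n-k}$ at each vertex impose a telescoping constraint relating the $f_a$'s (and $g_a$'s) around the two arcs of the circle. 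Coboundaries correspond to changing the splitting of the rank $2$ module at each vertex, i.e. to choosing an element $h_j \in Z$ at each vertex $j$, which modifies $f_a \mapsto f_a + (\text{diagonal scalar of }x_a \text{ on }I)\,h_{(i)} - (\text{diagonal scalar of }x_a\text{ on }J)\,h_{(i-1)}$ and similarly for $g_a$. So the computation reduces to: the space of tuples $(f_a, g_a)$ satisfying the relations, modulo the image of the coboundary map $(h_j) \mapsto \dots$, and I want to show this quotient is zero exactly when $I$ and $J$ are non-crossing.

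The heart of the argument is to read off, from the positions of the elements of $I$ and $J$ around the cycle, precisely where the ``extra'' factors of $t$ appear in the diagonal maps, and hence where the coboundary map fails to be surjective onto the cocycle space. I expect that each ``crossing'' of $I$ and $J$ — i.e. each failure of the disjoint-union-with-order condition in Definition~\ref{def:non-crossing} — contributes a one-dimensional (over $\CC$) obstruction, coming from a cocycle supported near an arc where the $I$-pattern and $J$-pattern interleave in a way that cannot be killed by adjusting the $h_j$. Conversely, when $I$ and $J$ are non-crossing, one of the two arcs between ``switch points'' is entirely ``thin'' in one component, allowing every cocycle to be trivialised by an explicit choice of $(h_j)$ propagated around that arc; I would make this precise by induction along the circle, starting the trivialisation at a vertex lying in the ``separating'' gap guaranteed by the non-crossing condition. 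The main obstacle I anticipate is the bookkeeping: organising the case analysis for the long relations $x^k = y^{n-k}$ so that the telescoping identities are transparent, and handling the cyclic nature of the problem (there is no canonical starting vertex, so one must either break the symmetry carefully or argue in a manifestly cyclic-invariant way). A secondary subtlety is checking that the relevant $\Ext^1$ computed in $\md(A)$ agrees with the one in the Frobenius category $\CM(A)$ — but since both $\rkone{I}$ and $\rkone{J}$ lie in $\CM(A)$ and the category is closed under extensions, this should be routine, and indeed one can alternatively phrase the whole computation intrinsically inside $\CM(A)$.
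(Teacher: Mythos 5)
This proposition is not proved in the present paper at all; it is cited verbatim from \cite[Prop 5.6]{JKS}, so the comparison is really with that proof. Your strategy --- set up an extension of $\rkone{I}$ by $\rkone{J}$ as a block upper-triangular $Q$-representation over $Z=\CC[[t]]$, identify the off-diagonal entries as cocycles, the change-of-splitting data $(h_j)$ as coboundaries, and then match the resulting obstruction against the crossing/non-crossing dichotomy --- is entirely in the spirit of the computation in \cite{JKS}, which also works directly with the explicit lattice (``profile'') picture of the rank-one modules and their $\Hom$-spaces (see \cite[Rem.\ 5.4]{JKS}, which this paper reuses in the proof of Lemma~\ref{lem:kappa}). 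So the approach is not a different route; it is the expected one.

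The problem is that you do not carry out the decisive step. The plan correctly reduces the question to ``when is every cocycle $(f_a,g_a)$ a coboundary?'' but what follows is only a guess (``I expect that each crossing contributes a one-dimensional obstruction'', ``I would make this precise by induction along the circle''). That is exactly the content of the proposition, and nothing in the proposal actually shows (a) that when $I$ and $J$ cross there is a specific cocycle not killed by any choice of $(h_j)$, nor (b) that when they are non-crossing one can solve for $(h_j)$ going around the circle compatibly with both the preprojective relations $x_ay_a=y_{a}x_{a}=t$ and the long relations $x^m=y^{n-m}$. In particular, because the $f_a,g_a$ and $h_j$ all range over the infinite-dimensional ring $Z$, you must also explain why the cocycle/coboundary quotient is even finite-dimensional before a ``count obstructions'' argument can make sense; this is true here, but it needs a word (e.g.\ the short relation $x_ay_a=t$ pins each $f_a$ against $g_a$ modulo a one-dimensional freedom, and the long relations then cut things down). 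A minor additional slip at the start: ``count how many non-isomorphic middle terms $E$ there are'' is not the same as computing $\dim\Ext^1$; you correct course in the next sentence, but the statement as written is wrong. As it stands this is a correct and promising outline, but the heart of the proof --- the explicit cocycle analysis matching crossings to non-split extensions --- is missing.
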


This explains why the non-crossing condition should be invariant under cycling the indices
 (cf.~Remark~\ref{rem:cycsym}), since the Ext-vanishing condition clearly is, due to the cyclic symmetry of $A$.

It follows from Proposition~\ref{prop:nc} that, 
for any maximal non-crossing collection~$\maxNC$, 
\begin{equation}\label{eq:TmaxNC}
 T_\maxNC = \bigoplus_{J\in\maxNC} \rkone{J}
\end{equation}
is a cluster tilting object in $\CM(A)$ \cite[Rem 5.7]{JKS}.
Note that in our context cluster tilting objects are the same as maximal rigid objects
\cite[Thm II.1.8]{BIRS} (see also \cite[Rem 4.8]{JKS}).
One important additional property is the following.

\begin{proposition}\label{2cycles}
If $\nn\geq 3$, then, for any basic maximal rigid module $T$ in $\CM(A)$, 
the Gabriel quiver $\gabquiv_T$ of the algebra $\End_A(T)$ has no loops or $2$-cycles. 
\end{proposition}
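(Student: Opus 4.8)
The plan is to prove Proposition~\ref{2cycles} by combining general homological facts about cluster tilting objects in Frobenius $2$-Calabi--Yau categories with the specific geometry of $\CM(A)$. First I would recall that, since $\CM(A)$ is a Hom-finite (on the stable category) Frobenius $2$-Calabi--Yau category and $T$ is cluster tilting, the Gabriel quiver of $\End(T)$ having no loops is equivalent to the statement that for each indecomposable summand $T_i$ the space $\operatorname{rad}(T_i,T_i)/\operatorname{rad}^2(T_i,T_i)$ vanishes, and having no $2$-cycles is equivalent to the statement that for distinct summands $T_i,T_j$ not both of $\operatorname{irr}(T_i,T_j)$ and $\operatorname{irr}(T_j,T_i)$ are nonzero. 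Over a complete local base these irreducible-map spaces are controlled by the approximation triangles coming from mutation, so the key is to understand the exchange triangles in $\CM(A)$.

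Next, I would invoke the mutation theory of \cite{BIRS}: for any indecomposable non-projective(-injective) summand $T_k$ there is a unique complement $T_k^*$ and exchange sequences
\[
 0 \to T_k \to E \to T_k^* \to 0, \qquad 0 \to T_k^* \to E' \to T_k \to 0,
\]
where $E,E'$ have no summand isomorphic to $T_k$ or $T_k^*$. The absence of loops then follows because any non-isomorphism $T_k\to T_k$ factors through the (right, resp. left) $\add(T/T_k)$-approximation appearing in these triangles, hence lies in $\operatorname{rad}^2$; this is the standard argument and I would phrase it at that level of generality. For the absence of $2$-cycles the issue is more delicate: one needs to rule out the simultaneous existence of irreducible maps $T_i\to T_j$ and $T_j\to T_i$ for $i\neq j$, equivalently that $T_j$ and $T_i$ both occur in the middle term $E$ of an exchange triangle for the other. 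The clean way to see this is via rank: apply $\operatorname{rk}$ (as in~\eqref{eq:rank}) to an exchange triangle $0\to T_i\to E\to T_i^*\to 0$. Since all the modules involved lie in $\CM(A)$ and rank is additive on short exact sequences, $\operatorname{rk}(E)=\operatorname{rk}(T_i)+\operatorname{rk}(T_i^*)$, and similarly $\operatorname{rk}(E')=\operatorname{rk}(T_i)+\operatorname{rk}(T_i^*)$. If $T_j$ occurred with positive multiplicity in both $E$ and $E'$, combining the two exchange triangles would force an inequality on ranks that, together with the indecomposability and the fact that each $\operatorname{rk}(T_i)\geq 1$, gives a contradiction — this is exactly the mechanism used in the general "no $2$-cycles" results for $2$-CY categories (e.g. in \cite{BIRS}), and I would adapt that argument using our explicit rank function rather than a dimension function.

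The main obstacle I anticipate is making the Hom-finiteness work correctly: the ambient category $\CM(A)$ is $\Hom$-infinite (Hom-spaces are free $Z$-modules of positive rank), so the notions of radical, irreducible map and Gabriel quiver of $\End(T)$ must be set up over the complete local ring $Z=\k[[t]]$ rather than over $\k$, and the "factoring through an approximation lies in $\operatorname{rad}^2$" step needs the approximation maps themselves to lie in $\operatorname{rad}$ over $Z$. This should be fine because $\End(T)$ is a module-finite $Z$-algebra and hence semiperfect, so its radical and idempotents behave as in the Artinian case; but I would need to state this carefully, perhaps citing the relevant structure of $\End(T)$ from \cite{JKS}. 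A secondary point is that $T$ may have projective-injective (frozen) summands for which one direction of mutation is unavailable; for those one argues directly, or observes that the frozen summands still admit the relevant approximation sequences within $\CM(A)$. Once the correct $Z$-linear framework is fixed, the rank argument for $2$-cycles and the approximation argument for loops are both short, so the bulk of the proof is really the setup.
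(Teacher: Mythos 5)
Your proposal takes a quite different route from the paper, and the central step for ruling out $2$-cycles has a gap that I don't think can be repaired in the form you sketch. You propose to apply the rank function to the two exchange sequences
\[
 0 \to T_i \to E \to T_i^* \to 0, \qquad 0 \to T_i^* \to E' \to T_i \to 0,
\]
and claim that if some $T_j$ occurred in both $E$ and $E'$ this would ``force an inequality on ranks.'' But rank is additive on short exact sequences in $\CM(A)$, so all one gets is the equality $\rk(E)=\rk(E')=\rk(T_i)+\rk(T_i^*)$; there is no inequality, and nothing in this bookkeeping prevents a fixed $T_j$ from appearing in both middle terms. Indeed, absence of $2$-cycles is \emph{not} a general feature of cluster tilting objects in Frobenius $2$-CY categories — in \cite{BIRS} it is taken as a hypothesis (part of the definition of a ``cluster structure''), not proved in general — so any correct proof must use something specific to $\CM(A)$. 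A secondary issue is your treatment of the projective-injective summands: you note that no exchange sequence is available there and wave at ``arguing directly,'' but that is precisely the case your approximation-based arguments do not reach.

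The paper's proof instead uses the projection functor $\trun\colon\CM(A)\to\Sub Q_k$ from \cite[Prop.~4.3]{JKS}, whose kernel is $\add P_n$. Writing $T=P_n\oplus T'$, the image $\trun T'$ is a maximal rigid object in the Hom-finite Frobenius $2$-CY category $\Sub Q_k$, where the no-loops-no-$2$-cycles statement is already established by the preprojective-algebra arguments of \cite{GLS08,GLS06b}. Since the Gabriel quiver of $\End(\trun T')$ is obtained from that of $\End(T)$ by deleting the vertex of $P_n$, this controls all vertices except the one for $P_n$; the cyclic symmetry of $A$ then lets you repeat the argument with a different projective deleted, covering every vertex. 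Note this neatly sidesteps both obstacles in your approach: the passage to $\Sub Q_k$ restores $\Hom$-finiteness so the GLS arguments apply verbatim, and the frozen summands (other than the one deleted) become non-projective in $\Sub Q_k$, so they are no longer a special case. If you want a self-contained argument in $\CM(A)$ you would need a genuinely new mechanism for $2$-cycles; the rank computation alone does not supply one.
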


\begin{proof}
Write $T=P_0\oplus T'$. 
The Gabriel quiver $Q'$ of $\End(\pi T')$ in $\Sub Q_\mm$ has 
no loops or $2$-cycles, 
by the arguments in \cite[\S8.1]{GLS08} and \cite[Thm 2.2]{GLS06b}.
On the other hand, $Q'$ is obtained from the Gabriel quiver $Q$ of $\End_A(T)$ 
by deleting the vertex $0$ and all incident arrows, 
so $Q$ has no loops or $2$-cycles at any vertices different from $0$. 
By the cyclic symmetry of $A$, the same argument applies when we replace $P_0$ by 
any other indecomposable projective-injective $P_j$ in $\CM(A)$.
Now, provided $\nn\geq 3$, there is a vertex $j$ of $\gabquiv_T$ (corresponding to $P_j$) 
different from any given pair of vertices of $\gabquiv_T$ and so the result follows.
\end{proof}

\section{The invariant $\kappa(M,N)$} \label{sec:maincon}

To make the definition, we need to choose a vertex in $Q_0$. 
Any vertex will do, due to the cyclic symmetry of $A$, 
but this choice then breaks the cyclic symmetry 
and gives a total order on the index set $C_1=\{1,\ldots,\nn\}$.
Hence, for simplicity, we choose the vertex $0$,
giving the usual order on $C_1$, which has already been used in defining the quantum 
Grassmannian in Section~\ref{sec:qgr} (cf.~Remark~\ref{rem:cycsym}).
In Section~\ref{sec:seeds} we will see the direct relationship between this choice and the quantum structure.

For $M\in\CM(A)$, we write $M_0$ for the free $Z$-module $e_0 M$. 
The restriction functor
\begin{equation}\label{eq:E}
\funE\colon \md A \to \md Z\colon M\mapsto M_0
\end{equation}
may also be written as
\[
 \funE =\Hom_A(Ae_0,-)=e_0A\otimes_A -
\] 
and has left and right adjoints $\funP,~ \funJ \colon \md Z \to \md A$ given by
\[
 \funP= Ae_0 \otimes_Z -, \qquad \funJ = \Hom_Z(e_0A,-).
\] 
When $W$ is a free $Z$-module, both $\funP W $ and $\funJ W $ are projective $A$-modules
(\cite[Lemma 3.6]{JKS}).
Since $e_0Ae_0=Z$ we also have canonical identifications
\begin{equation}\label{eq:can-iso}
 M_0 = \funE\funP M_0 \text{ and }  \funE\funJ N_0 = N_0
\end{equation}
given by the unit/counit of the adjunction.

Consequently, the natural adjunction isomorphisms 
\begin{equation}\label{eq:nat-iso}
 \Hom_A(\funP M_0, N) \isoto \Hom_Z(M_0, N_0) \isofrom \Hom_A(M, \funJ N_0)
\end{equation}
may simply be described as applying the functor $\funE$ and making the canonical identification.

\begin{lemma}\label{lem:pi-om}
When $M,N\in\CM(A)$, the natural maps $\alpha\colon \funP M_0 \to M$ and $\beta\colon N\to  \funJ N_0$ are injective with finite dimensional cokernels
$\pi M$ and $\omega N$, respectively, 
i.e.~we have short exact sequences
\begin{align}
 0\to \funP M_0  \stackrel{\alpha}{\to} M & \to \pi M \to 0 
\label{eq:pi-def}\\
 0\to N\stackrel{\beta}{\to}  \funJ N_0  & \to \omega N \to 0
\label{eq:om-def}
\end{align}
\end{lemma}
\begin{proof}
In both cases, the restriction of the natural map to the vertex $0$ is the identity map, so the kernel has rank 0
and hence, being in $\CM(A)$, it must be zero.
The cokernel also has rank 0 and is finitely generated, so is finite dimensional. 
\end{proof}

\begin{remark}\label{rem:pi-om}
Note that $\pi$ and $\omega$ are both (exact) functors $\CM(A)\to \md\Pi$.
Indeed, $\pi$ is the same as in \eqref{eq:pi-def-old} and has image the subcategory $\Sub Q_{\mm}$, 
while $\omega$ has image the subcategory $\Fac Q_{\mm}$ of modules with top at $\nn-\mm$.
Both $\Sub Q_{\mm}$ and $\Fac Q_{\mm}$ give 
categorifications for open cells in the Grassmannian, so we could have chosen $\omega$ instead of $\pi$, 
when proving, in \cite{JKS}, that $\CM(A)$ is a categorification of the Grassmannian
(cf.~\cite[Sec~3.2]{GLS08}).
\end{remark}

\begin{definition}\label{def:Kv}
For $M,N \in\CM(A)$, define  the map 
\[ \phi_0\colon \Hom_A(M, N)\to\Hom_Z(M_0, N_0)\colon f\mapsto \funE(f) \]
and let
\[ K(M, N)=\coker\phi_0. \]
\end{definition}

We have the following equivalent descriptions of $\phi_0$,
and thus of its cokernel.
 
\begin{lemma}\label{lem:phi-equivs}
We have the following commutative diagram. 
\[
\begin{tikzpicture}[scale=0.8]
\draw (6, 1) node(a2) {$\Hom_A(M, \funJ N_0)$};
\draw (1,3) node (b1) {$\Hom_A(M, N)$};
\draw (6,3) node (b2) {$\Hom_Z(M_0, N_0)$};
\draw (11,3) node(b3) {$\Hom_A(\funP M_0, \funJ N_0)$,};
\draw (6,5) node(c2) {$\Hom_A(\funP M_0, N)$};
\draw [cdmap] (b1) to node [above]{\maplab $\phi_0$} (b2);
\draw [cdmap] (b3) to node [above] {\maplab $\isom$}(b2);
\draw [cdmap] (b1) to node [below left] {\maplab $\beta_*(M)$} (a2);
\draw [cdmap] (b1) to node [above left] {\maplab $\alpha^*(N)$} (c2);
\draw [cdmap] (a2) to  node [below right] {\maplab $\alpha^*(\funJ N_0)$} (b3);
\draw [cdmap] (c2) to node [above right] {\maplab $\beta_*(\funP M_0)$} (b3);
\draw [cdmap] (a2) to node [right] {\maplab $\isom$} (b2);
\draw [cdmap] (c2) to node [right] {\maplab $\isom$} (b2);
\end{tikzpicture}
\]
where the four inward maps are all given by applying the functor $\funE$ and making the canonical identifications in \eqref{eq:can-iso}.

Thus, $K(M, N)$ is also naturally the cokernel of any one of the three injective maps $\alpha^*(N)$,
$\beta_*(M)$ and $\beta_*(\funP M_0) \circ \alpha^*(N) = \alpha^*(\funJ N_0) \circ \beta_*(M)$.
\end{lemma}

\begin{proof}
The vertical isomorphisms are precisely those of \eqref{eq:nat-iso}
and the third unnamed isomorphism can be identified with either of two adjunction isomorphisms
using \eqref{eq:can-iso}.
The fact that the four triangles commute follows immediately from the fact that the natural maps $\alpha$ and $\beta$ restrict to the identity map under $\funE$.
\end{proof}

\begin{lemma}\label{lem:Kv}
Let $M,N \in\CM(A)$. Then  the map 
\[
\phi_0\colon \Hom_A(M, N)\to\Hom_Z(M_0, N_0)
\]
 is injective, i.e.~we have the short exact sequence 
\begin{equation}
\label{eq:def-Kv}
0\to  \Hom_A(M, N)\to  \Hom_Z(M_0, N_0)\to K(M, N)\to 0.
\end{equation}
Further, the cokernel $K(M, N)$ is finite dimensional.
\end{lemma}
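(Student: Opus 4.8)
The plan is to deduce this lemma from the previous one by a diagram chase. First I would set up the commutative diagram comparing the adjunction isomorphisms with the natural maps. For $f\in\Hom_A(M,N)$, applying the functor $e_v(-)$ gives $f_v=e_vf\in\Hom_Z(M_v,N_v)=\Hom_v(M,N)$, and this is exactly the composite of $\Hom_A(M,N)\to\Hom_A(M,J_vN)$ (post-composition with the unit $N\to J_vN$) with the adjunction isomorphism $\Hom_A(M,J_vN)\isom\Hom_v(M,N)$. So injectivity of $\phi_v$ reduces to injectivity of $\Hom_A(M,-)$ applied to the map $N\to J_vN$, equivalently to showing that $\Hom_A(M,N)\to\Hom_A(M,J_vN)$ is injective.

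The key input is the short exact sequence \eqref{eq:om-def}: $0\to N\to J_vN\to\omega N\to 0$. Applying $\Hom_A(M,-)$ yields the exact sequence
\[
0\to\Hom_A(M,N)\to\Hom_A(M,J_vN)\to\Hom_A(M,\omega N),
\]
so the first map is automatically injective, giving injectivity of $\phi_v$. Then $K_v(M,N)=\coker\phi_v$ embeds into $\Hom_A(M,\omega N)$. Since $\omega N$ is finite dimensional (by the previous lemma) and $M$ is finitely generated over the Noetherian ring $A$, the space $\Hom_A(M,\omega N)$ is finite dimensional, and hence so is its subspace $K_v(M,N)$. That finishes the proof.

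The one point that needs a little care — and the only plausible obstacle — is verifying that $\phi_v$ really is the composite described above, i.e. that under the adjunction isomorphism $\Hom_A(M,J_vN)\isom\Hom_v(M,N)$ the unit map $N\to J_vN$ corresponds to the identity-at-$v$ inclusion, so that the triangle identity makes $f\mapsto f_v$ agree with $f\mapsto$ (image of $\eta_N\circ f$). This is a standard naturality/triangle-identity check for the adjoint pair $(e_v(-),\,\Hom_Z(e_vA,-))$, and once it is in place the rest is formal. One could alternatively argue injectivity of $\phi_v$ directly: if $f_v=0$ then $f$ vanishes on $e_vM$, hence on $P_vM=Ae_v\otimes_Z M_v$ (its image), so $f$ factors through $\pi M=M/P_vM$; but $\pi M$ is finite dimensional, hence torsion over $Z$, while $N\in\CM(A)$ is $Z$-free, so any $A$-map $\pi M\to N$ is zero, forcing $f=0$. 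Either route works; I would present the adjunction version since it simultaneously produces the embedding $K_v(M,N)\hookrightarrow\Hom_A(M,\omega N)$ used for finite-dimensionality.
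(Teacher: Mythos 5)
Your proposal is correct and follows essentially the same route as the paper: both use the adjunction isomorphism $\Hom_v(M,N)\isom\Hom_A(M,J_vN)$ to see $\phi_v$ as $\Hom_A(M,-)$ applied to the monomorphism $N\to J_vN$, then apply $\Hom_A(M,-)$ to the short exact sequence \eqref{eq:om-def} to embed $K_v(M,N)$ into the finite-dimensional space $\Hom_A(M,\omega N)$. The paper phrases the last step slightly differently (identifying $K_v(M,N)$ with the kernel of the connecting map $\Hom_A(M,\omega N)\to\Ext^1_A(M,N)$), but this is the same embedding; your extra alternative direct injectivity argument via $\pi M$ being $Z$-torsion is sound but unnecessary.
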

 
\begin{proof}
By Lemma~\ref{lem:phi-equivs},
we can identify the map $\phi_0$ with the map $\beta_*(M)$
obtained by applying $\Hom_A(M,-)$ to the injective map $N\to  J N_0$. 
Hence $\phi_0$ is injective.
Applying $\Hom_A(M,-)$ to the whole short exact sequence \eqref{eq:om-def}, 
we can also identify $K(M, N)$ with the image of the induced map $\Hom_A(M, \funJ N_0)\to \Hom_A(M,\omega N)$ 
and hence it is finite dimensional, because $\omega N$ is, by Lemma ~\ref{lem:pi-om}.
\end{proof}

\begin{definition} \label{def:kappa}
We define 
\[ \kappa(M, N) =\dim  K(M, N), \]
where $ K(M, N)$ is as in Definition~\ref{def:Kv} and is finite dimensional by Lemma~\ref{lem:Kv}.
\end{definition}

\begin{lemma} \label{lem:commlseq}
The quotient map $p\colon M\to \pi M$ in \eqref{eq:pi-def} induces an isomorphism 
$p^*(\omega N) \colon \Hom_A(\pi M, \omega N) \isoto \Hom_A(M, \omega N)$.
Thus we obtain an isomorphism of long exact sequences
\[
\begin{tikzpicture}[xscale=0.85, yscale=0.8]
\draw (-1.7, 3) node (b0) {$\cdzero$};
\draw (0.7,3) node (b1) {$\Hom_A(M, N)$};
\draw (4.8,3) node (b2) {$\Hom_A(M, \funJ N_0)$};
\draw (9.3,3) node (b3) {$\Hom_A(M, \omega N)$};
\draw (13, 3) node (b4) {$\Ext_A^1(M, N)$};
\draw (15.2,3) node (by) {$\cdzero$};
\draw(-1.7, 1) node(c0) {$\cdzero$};
\draw (0.7,1) node (c1) {$\Hom_A(M, N)$};
\draw (4.8,1) node (c2) {$\Hom_Z(M_0, N_0) $}; 
\draw (9.3, 1) node(c3) {$\Hom_A(\pi M, \omega N)$};
\draw (13, 1) node(c4) {$\Ext_A^1(M, N)$};
\draw (15.2,1) node (cy) {$\cdzero$};
\foreach \T/\H in {b0/b1, b1/b2, b2/b3, b3/b4, b4/by, c0/c1, c4/cy}
  \draw [cdmap] (\T) to (\H);
\draw [cdmap] (c1) to node [above] {\maplab $\phi_0$} (c2) ;
\draw [cdmap] (c2) to node [above] {\maplab $\psi$}(c3);
\draw [cdmap] (c3) to node [above] {\maplab $\delta$} (c4);
\draw [equals] (b1) to (c1); 
\draw [cdmap] (b2) to node [right] {\maplab $\isom$} (c2);
\draw [cdmap] (b3) to node [right] {\maplab $\isom$} (c3);
\draw [equals] (b4) to (c4); 
\end{tikzpicture}
\]
where $\psi$ and $\delta$ are defined to make the middle and right-hand squares commute,
and we may identify $K(M, N)$ with the image of $\psi$ in $\Hom(\pi M, \omega N)$.
\end{lemma}

\begin{proof}
First, as $\CM(A)$ is a Frobenius category and $\funJ N_0$ is projective-injective, 
\[
\Ext_A^1(M, \funJ N_0)=0.
\] 
So applying $\Hom_A(M, -)$ to \eqref{eq:om-def} gives  the top long exact sequence.
Next, $p^*(\omega N)$ is an isomorphism, 
as $\funE \omega N=0$, so $\Hom_A(\funP M_0, \omega N)=0$.
Finally, the first vertical isomorphism is from Lemma~\ref{lem:phi-equivs},
while the second is $p^*(\omega N)^{-1}$.
\end{proof}

\begin{remark}
If we apply $\Hom_A(-,N)$ to \eqref{eq:pi-def} and note that
$\Ext_A^1(\funP M_0,N)=0$, as $\funP M_0$ is a projective module, 
and $\Hom_A(\pi M, N)=0$, as $\pi M$ is finite dimensional and $N\in \CM A$,
then we get another long exact sequence
\[ 
  0\to  \Hom_A(M, N)\to  \Hom_A(\funP M_0,N)\to \Ext_A^1(\pi M, N)\to \Ext_A^1(M,N) \to 0.
\]
In fact, this sequence is isomorphic to the two sequences in Lemma \ref{lem:commlseq}.
In particular, the identification of the middle maps rests on the fact that a map of short exact sequences 
induces an equivalence between the short exact sequences obtained by pull-back 
and push-forward respectively.
\end{remark}

\begin{remark}
Recall \cite[Cor.~4.6]{JKS} that $\pi$, as in \eqref{eq:pi-def-old}, 
induces an isomorphism $\Ext^1_A(M,N)\isom\Ext^1_{\Pi}(\pi M, \pi N)$.
Therefore, the map $\delta$ in Lemma~\ref{lem:commlseq} gives a possible description
of the invariant $\kappa(M,N)$ within $\md \Pi$.
However, this description uses both the representations $\pi M, \pi N$ from $\Sub Q_\mm$
and $\omega N$ from $\Fac Q_\mm$.
\end{remark}

\section{Constructing seed data from a cluster tilting object} \label{sec:seeds}

In this section, we construct two matrices $\mat{B}=\mat{B}(T)$, in \eqref{eq:BfromT}, and $\mat{L}=\mat{L}(T)$, in \eqref{eq:LfromT},
associated to a cluster tilting object $T$ in $\CM(A)$. 
We show that they are compatible and mutate consistently when $T$ mutates. 
The definition of $\mat{B}$ is standard, but that of $\mat{L}$ uses our new invariant $\kappa$ from the previous section.
We also show that, when $T=T_\maxNC$ as in \eqref{eq:TmaxNC}, 
the matrix $\mat{L}$ recovers the quasi-commutation rules for the corresponding quantum minors
(see Theorem~\ref{thm:LZ}).

Let $T=\oplus_{i=1}^{\cn} T_i\in \CM(A)$ be a cluster tilting object, 
where each $T_i$ is indecomposable and the last $\cn-\cm$ summands 
are the indecomposable projective objects in $\CM(A)$. 
For any two summands of $T$, we write 
\[
  \Irr(T_i, T_j)=\rad(T_i, T_j)/\rad^2(T_i, T_j)
\]
for the `space of irreducible maps' $T_i\to T_j$ in $\add T$ 
and recall that $\dim \Irr(T_i, T_j)$ is the number of arrows $i\to j$ 
in the Gabriel quiver $\gabquiv_T$ of $\End_A(T)$.
Of course, strictly speaking, the irreducible maps are those in 
$\rad(T_i, T_j) \setminus \rad^2(T_i, T_j)$.

Let $\mat{B}=(b_{ij})$ be the $\cn\times \cm$-matrix defined by 
\begin{equation}
\label{eq:BfromT}
  b_{ij} = \dim \Irr(T_i, T_j)-\dim \Irr(T_j, T_i),
\end{equation}
where $1\leq i\leq \cn$ and $1\leq j\leq \cm$.
Note that, if $\mat{B}$ is non-trivial, i.e. $\cm\geq 1$, 
then by Remark~\ref{rem:nondegen}, we must have $\nn\geq 4$.
Hence, by Proposition~\ref{2cycles}, at least one of $\dim \Irr(T_j, T_i) $ and $\dim \Irr(T_i, T_j)$ is zero. 
Thus $b_{ij}\geq 0$ if and only if there are no arrows in $\gabquiv_T$ from $j$ to $i$,
and then $b_{ij}$ is the number of arrows from $i$ to $j$.
Similarly, $b_{ij}\leq0$ if and only if there are no arrows in $\gabquiv_T$ from $i$ to $j$,
and then $-b_{ij}$ is the number of arrows from $j$ to $i$.

Suppose that $1\leq k\leq \cm$, so that $T_k$ is a non-projective indecomposable summand of $T$. 
Then (cf.~\cite[\S5]{GLS06b}, in particular Propositions~5.6 and 5.7) we have the following short exact \emph{exchange sequences} for $T_k$: 
\begin{align}
&0\to T_k \longto T'_k
 \longto T_k^*\to 0, \label{eq:right-mutation} \\
&0\to T^*_k \longto T''_k
 \longto T_k \to 0, \label{eq:left-mutation}
\end{align}
where
\begin{equation*}
T'_k = \bigoplus_{j:b_{jk}<0} T_j^{-b_{jk}}
\quad\text{and}\quad
T''_k = \bigoplus_{j:b_{jk}>0} T_j^{b_{jk}}.
\end{equation*}
Note that the maps in these sequences are minimal left/right approximations,
and hence are made up of irreducible maps (inducing a basis of $\Irr$) to and from $T_k$ or $T_k^*$.
The new cluster tilting object $(T/T_k)\oplus T^*_k$ 
is called the \emph{mutation} of $T$ in direction $k$ and denoted by $\mu_k(T)$. 

Mutation of a cluster tilting object $T$ is compatible with mutation of the matrix $\mat{B}(T)$. 
For the convenience of the reader, we provide a proof, 
adapted from the proof in \cite[Theorem II.1.6]{BIRS} to our setting with minor modifications.

\begin{theorem} \label{thm:mutationmatrix} 
Let $T$ be a cluster tilting object and $T_k$ be a non-projective indecomposable summand. 
Then
\[
  \mat{B}(\mu_k(T))=\mu_k(\mat{B}(T)).
\]
\end{theorem}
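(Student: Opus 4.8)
The plan is to compare the two exchange matrices entry by entry, using the exchange sequences \eqref{eq:left-mutation} and \eqref{eq:right-mutation} to track how irreducible maps change under mutation. Write $T' = \mu_k(T)$ with indecomposable summands $T'_i = T_i$ for $i\neq k$ and $T'_k = T_k^*$, and set $B(T') = (b'_{ij})$. The goal is to verify that $b'_{ij}$ agrees with the $(i,j)$-entry of $\mu_k(B(T))= E\transp B F$, where $E,F$ are the matrices from \eqref{eq:BLmutation}. First I would dispose of the easy cases: when neither $i$ nor $j$ equals $k$ (and both lie in the appropriate ranges), the formula $\mu_k(B)_{ij} = b_{ij} + \sfrac12\bigl(|b_{ik}|b_{kj} + b_{ik}|b_{kj}|\bigr)$ must be matched with the count of arrows in the Gabriel quiver of $\End_A(\mu_k T)$; and when $i=k$ or $j=k$, the Fomin--Zelevinsky rule simply reverses the relevant arrows, which corresponds to the obvious fact that $\Irr(T_k^*, T_j)$ and $\Irr(T_j, T_k)$ swap roles (and similarly on the other side), visible directly from \eqref{eq:left-mutation} and \eqref{eq:right-mutation}.

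The substantive case is $i,j\notin\{k\}$, and here I would follow the strategy of \cite[Theorem II.1.6]{BIRS}. The key is to compute, for such $i$ and $j$, the dimension of $\Irr(T'_j, T'_i) = \Irr(T_j, T_i)$ as a space of maps in the mutated cluster-tilting subcategory $\add(\mu_k T)$, as opposed to $\add T$ — these can differ precisely because a path through $T_k$ in the old quiver may or may not survive, and new arrows $j\to i$ can appear that factor through $T_k^*$. Concretely, I would use the exchange triangles to express $\Hom$-spaces and the radical/radical-squared filtration of $\End_A(T)$ versus $\End_A(\mu_k T)$, extracting $\Irr$ as $\rad/\rad^2$. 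The bookkeeping shows that the change in the number of arrows $j\to i$ is governed exactly by the compositions of arrows $j\to k\to i$ (contributing $\max\{0,b_{ik}\}\cdot\max\{0,b_{kj}\}$ type terms) together with a correction coming from whether an arrow $i\to j$ already existed, i.e. the sign of $b_{ij}$ — which is exactly the content of the FZ mutation rule once one takes into account that, by Proposition~\ref{2cycles}, the Gabriel quiver of $\End_A(\mu_k T)$ again has no loops or $2$-cycles, so that $b'_{ij}$ is unambiguously $\pm(\text{arrow count})$.

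The main obstacle I anticipate is exactly this last point: controlling the $2$-cycle cancellation. A priori the mutated quiver could acquire a composite arrow $j\to k^* \to i$ and simultaneously an arrow $i\to j$, and one must show these cancel in the correct multiplicities to land on $b_{ij} + (\text{composite terms})$ rather than on raw arrow counts that overcount. In the \cite{BIRS} argument this is handled by working in the $2$-Calabi--Yau setting; here $\CM(A)$ is Frobenius (stably $2$-CY), and Proposition~\ref{2cycles} already guarantees the no-loops-no-$2$-cycles property is preserved under mutation, so the cancellation is forced on formal grounds once the raw counts are computed. Thus the proof reduces to (a) computing the raw arrow counts in $\mu_k T$ from the exchange sequences, and (b) invoking Proposition~\ref{2cycles} to collapse $2$-cycles, after which a direct comparison with $E\transp B F$ finishes the argument. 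I would present (a) as the bulk of the proof, following \cite{BIRS} with the evident translation from triangulated to Frobenius-exact language (replacing exchange triangles by the short exact sequences \eqref{eq:left-mutation}, \eqref{eq:right-mutation}), and treat (b) as a short closing remark.
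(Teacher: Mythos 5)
Your plan matches the paper's proof, which is also adapted from \cite[Theorem II.1.6]{BIRS}: the $i=k$ or $j=k$ entries are handled by arrow reversal read off from the exchange sequences \eqref{eq:left-mutation}, \eqref{eq:right-mutation}, and the $i,j\neq k$ entries by constructing, from pushouts and pullbacks of these sequences, a left and a right $(T/(T_k\oplus T_i)\oplus T_k^*)$-approximation of $T_i$ and then comparing the multiplicity of $T_j$ in their middle terms, yielding the FZ formula directly (the subtraction absorbs the extra summands coming from non-minimality).

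One place your sketch mislocates the key lemma: the paper invokes Proposition~\ref{2cycles} on $\End T$, not on $\End\mu_k T$. The point is that, since $Q(\End T)$ has no $2$-cycles, $B_i$ has no summand $T_k$, hence $\Ext^1(T_k^*,B_i)=0$ and the pushout $Y$ splits as $B_i\oplus T_k^*$; this splitting is what makes $Y\to T_i$ an explicit right approximation whose $T_j$-multiplicity can be read off. There is no separate step of computing ``raw'' arrow counts in $\mu_k T$ and then collapsing $2$-cycles by appeal to Proposition~\ref{2cycles} for the mutated object — the $\rad/\rad^2$ bookkeeping you describe is replaced by the explicit diagram chase. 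Still, your plan is the right one and would lead to a correct proof if carried out.
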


\begin{proof}
The exchange sequences (\ref{eq:right-mutation}) and (\ref{eq:left-mutation}) provide
an orientation-reversing correspondence between the arrows incident to vertex $k$ in the Gabriel quivers of 
$\End_A(T)$ and $\End_A(\mu_k(T))$. 
Hence
\[
  \mat{B}(\mu_k(T))_{ik}=-b_{ik}=\mu_k(\mat{B}(T))_{ik}.
\]
Thus it remains to consider the entries $\mat{B}(\mu_k(T))_{ji}$, where $i,j\neq k$. 
By definition (of~$\mat{B}$), we know that $T_i$ is not projective.
We consider in detail the case where there is no arrow from $k$ to $i$
or equivalently $b_{ik}\geq 0$.
Thus  $b_{ik}$ is the number of arrows from $i$ to $k$.

Consider the exchange sequence \eqref{eq:right-mutation} for $T_i$ instead of $T_k$
and write the middle term $T_i'=D_i'\oplus T_k^{\om}$.
Combined with the sequence \eqref{eq:right-mutation} for $T_k$, we get
the following commutative diagram of four short exact sequences
\[
\begin{tikzpicture}[xscale=2.3, yscale=1.6]
\pgfmathsetmacro{\hoff}{0.55}
\pgfmathsetmacro{\voff}{0.7}
\draw (1,1) node (A1) {$T_i$};
\draw (2,1) node (A2) {$D_i'\oplus T_k^{\om}$};
\draw (3,1) node (A3) {$T^*_i$};
\draw (1,0) node (B1) {$T_i$};
\draw (2,0) node (B2) {$ D_i'\oplus (T'_k)^\om$};
\draw (3,0) node (B3) {$X$};
\draw (2,-1) node (C2) {$(T_k^*)^\om$};
\draw (3,-1) node (C3) {$(T_k^*)^\om$};
\draw (A1)++(-\hoff,0) node (a1) {$\cdzero$};
\draw (A3)++(\hoff,0) node (a3) {$\cdzero$};
\draw (B1)++(-\hoff,0) node (b1) {$\cdzero$};
\draw (B3)++(\hoff,0) node (b3) {$\cdzero$};
\draw (A2)++(0,\voff) node (aa2) {$0$};
\draw (A3)++(0,\voff) node (aa3) {$0$};
\draw (C2)++(0,-\voff) node (cc2) {$0$};
\draw (C3)++(0,-\voff) node (cc3) {$0$};
\foreach \P/\Q in {a1/A1, A1/A2, A2/A3, A3/a3, b1/B1, B2/B3, B3/b3,
  B2/C2, A2/B2, B3/C3, aa2/A2, aa3/A3, C2/cc2, C3/cc3}
  \draw [cdmap] (\P) to (\Q);
\draw [cdmap] (A3) to node [right] {\maplab $g$} (B3);
\draw [cdmap] (B1) to node [above] {\maplab $a$} (B2);
\foreach \P/\Q in {A1/B1, C2/C3}
  \draw [equals] (\P) to (\Q);
\end{tikzpicture}
\] 
Now $a$ is the composition of the two left approximations in the diagram
and, by construction, neither $D_i'$ nor $T_k'$ have any summands isomorphic to $T_k$ or $T_i$.
In particular, there is no $T_i$ summand in $T_k'$ because there is no arrow $k\to i$.
Thus $a$ is a left $\add T/(T_k\oplus T_i)$-approximation.

However, in order to compute $\mat{B}(\mu_k(T))_{ji}$,
we need to know left and right $\add \Tbs$-approximations for $T_i$, where 
\[
  \Tbs=(T/(T_k\oplus T_i)) \oplus T^*_k.
\]
But any map $f\colon T_i\to T_k^*$ factors 
through the right approximation $r_k\colon T'_k\to T_k^*$ in \eqref{eq:right-mutation}.
\[
\begin{tikzpicture}[scale=1.6]
\draw (0,1) node (A2) {$T_i$};
\draw (0,0) node (B2) {$T_k^*$};
\draw (-1,0) node (B1) {$T'_k$};
\draw [cdmap] (A2) to node [right] {\maplab $f$} (B2);
\draw [cdmap] (B1) to node [below] {\maplab $r_k$} (B2);
\draw [cdmap, densely dotted] (A2) to node [above left] {\maplab $s$} (B1);
\end{tikzpicture}
\]
As $T'_k$ has no summand isomorphic to $T_i$ or $T_k$, 
the map $s$ factors through $a$ and so $f$ factors through $a$. 
Thus $a$ is actually a left $\Tbs$-approximation.
Note that this implies that $\Ext^1_A(X,T^*_k)=0$ and hence $\Ext_A^{1}(T_k^*, X)=0$,
by the 2-Calabi-Yau property of $\CM(A)$.

Now take the exchange sequence \eqref{eq:left-mutation} for $T_i$ instead of $T_k$,
together with its push-out along the map $g\colon T_i^*\to X$, 
to obtain the following commutative diagram of four short exact sequences. 
\[
\begin{tikzpicture}[xscale=2, yscale=1.6]
\pgfmathsetmacro{\hoff}{0.7}
\pgfmathsetmacro{\voff}{0.7}
\draw (0,1) node (A1) {$T^*_i$};
\draw (1,1) node (A2) {$T''_i$};
\draw (2,1) node (A3) {$T_i$};
\draw (0,0) node (B1) {$X$};
\draw (1,0) node (B2) {$Y$};
\draw (2,0) node (B3) {$T_i$};
\draw (0,-1) node (C1) {$(T_k^*)^\om$};
\draw (1,-1) node (C2) {$(T_k^*)^\om$};
\draw (A1)++(-\hoff,0) node (a1) {$\cdzero$};
\draw (A3)++(\hoff,0) node (a3) {$\cdzero$};
\draw (B1)++(-\hoff,0) node (b1) {$\cdzero$};
\draw (B3)++(\hoff,0) node (b3) {$\cdzero$};
\draw (A1)++(0,\voff) node (aa1) {$0$};
\draw (A2)++(0,\voff) node (aa2) {$0$};
\draw (C1)++(0,-\voff) node (cc1) {$0$};
\draw (C2)++(0,-\voff) node (cc2) {$0$};
\foreach \P/\Q in {a1/A1, A1/A2, A2/A3, A3/a3, b1/B1, B1/B2, B3/b3,
  B1/C1, A2/B2, B2/C2, aa1/A1, aa2/A2, C1/cc1, C2/cc2}
  \draw [cdmap] (\P) to (\Q);
\draw [cdmap] (A1) to node [left] {\maplab $g$} (B1);
\draw [cdmap] (B2) to node [above] {\maplab $b$} (B3);
\foreach \P/\Q in {A3/B3, C1/C2}
  \draw [equals] (\P) to (\Q);
\end{tikzpicture}
\] 
By assumption, there is no arrow $k$ to $i$, 
so there is no summand of $T''_i$ isomorphic to $T_k$. 
Hence $\Ext_A^1(T_k^*, T''_i)=0$ and so $Y=T''_i\oplus (T_k^*)^{\om}$.
Furthermore $b$ is a right $\Tbs$-approximation, 
because $b$ has a component that is the 
right approximation $T''_i\to T_i$ and $\Ext_A^{1}(T_k^*, X)=0$.

Denote the multiplicity of a module $M$ in $N$ by $\mult{M}{N}$. 
Because $(\mat{B}(\mu_k(T)))_{ji}$ is the difference of  the number of arrows from $j$ to $i$ 
and the number arrows from $i$ to $j$ in the Gabriel quiver of $\End_A(\mu_k(T))$, we have
\begin{align*}
\mat{B}(\mu_k(T))_{ji}
&= \abs{\{\text{arrows } j\to i\}} - \abs{\{\text{arrows } i\to j\}} \\
&= \mult{T_j}{T''_i\oplus (T_k^*)^\om} - \mult{T_j}{D_i'\oplus (T'_k)^\om} \\
&= \mult{T_j}{T''_i} - \mult{T_j}{T_i'} - \om\mult{T_j}{T'_k}
\quad\text{(as $T_j$ isn't $T_k^*$ or $T_k$)} \\
&= \brakp{b_{ji}} - \brakp{-b_{ji}} -\om \brakp{-b_{jk}} \\
&= b_{ji} -\om \brakp{-b_{jk}} \\
&= \mu_{k}(\mat{B}(T))_{ji},
\end{align*} 
where $\brakp{x}=\max\{0, x\}$ (as in \S\ref{sec:qca}).
This completes the proof when $i,j\neq k$ and $b_{ik}\geq 0$.
The case where $b_{ik}\leq 0$, i.e.~there is no arrow from $i$ to $k$, is similar 
(or rather dual)
and we omit the proof.
\end{proof}

\begin{remark}
The maps $a$ and $b$, in the proof of Theorem~\ref{thm:mutationmatrix}, 
are not necessarily minimal approximations. 
However, by the nature of the two short exact sequences containing $a$ and $b$, the middle terms $Y$ 
and $D_i'\oplus (T'_k)^\om$ have the same number of extra summands that are isomorphic to $T_j$.
\end{remark}

Recalling the invariant $\kappa(M, N)$ from Definition~\ref{def:kappa}, we define
\begin{equation}\label{eq:lam-def}
 \lambda(M, N) = \kappa(N,M) - \kappa(M, N).
\end{equation} 
Given a cluster tilting object $T=\oplus_{i=1}^{\cn} T_i$, we define an ${\cn\times \cn}$-matrix 
$\mat{L}=\mat{L}(T)=(\lambda_{ij})$ associated to $T$, and the vertex $v$, by 
\begin{equation}\label{eq:LfromT}
\lambda_{ij}=\lambda(T_i, T_j).
\end{equation} 

It turns out that this is an appropriate generalisation of the construction of 
Geiss-Leclerc-Schr\"oer~\cite[\S10.2]{GLS13} 
to the current context, when $\Hom_A(M, N)$ is infinite dimensional, 
so their definition cannot be used.
Indeed, their proofs of \cite[Props 10.1 \& 10.2]{GLS13} can be adapted to give a similar result, as follows.

\begin{theorem}\label{thm:main}
The two matrices $\mat{B}$ and $\mat{L}$ associated to $T$, as above, are compatible. 
Furthermore, mutation of cluster tilting objects is consistent with mutation of seed data,
that is, the pair associated to the mutated object $\mu_k(T)$
is the mutated pair $\mu_k(\mat{B}, \mat{L})$. 
\end{theorem}

\begin{proof}
Applying $\Hom_A(T_\ell, -)$ and $\Hom_Z(e_0T_\ell, e_0-)$ to \eqref{eq:right-mutation},
and using a degenerate case of the Snake Lemma, we obtain
\[
\begin{tikzpicture}[xscale=4.2, yscale=1.6]
\pgfmathsetmacro{\hoff}{0.6}
\pgfmathsetmacro{\voff}{0.7}
\draw (1,1) node (A1) {$\Hom_A(T_\ell, T_k)$};
\draw (2,1) node (A2) {$\Hom_A(T_\ell, T'_k)$};
\draw (3,1) node (A3) {$\Hom_A(T_\ell, T_k^*)$};
\draw (1,0) node (B1) {$\Hom_Z(e_0T_\ell, e_0T_k)$};
\draw (2,0) node (B2) {$\Hom_Z(e_0T_\ell, e_0T'_k)$};
\draw (3,0) node (B3) {$\Hom_Z(e_0T_\ell, e_0T_k^*)$};
\draw (1,-1) node (C1) {$K(T_\ell, T_k)$};
\draw (2,-1) node (C2) {$K(T_\ell, T'_k)$};
\draw (3,-1) node (C3) {$K(T_\ell , T^*_k)$};
\draw (A1)++(-\hoff,0) node (a1) {$\cdzero$};
\draw (A3)++(\hoff,0) node (a3) {$\cdzero$};
\draw (B1)++(-\hoff,0) node (b1) {$\cdzero$};
\draw (B3)++(\hoff,0) node (b3) {$\cdzero$};
\draw (C1)++(-\hoff,0) node (c1) {$\cdzero$};
\draw (C3)++(\hoff,0) node (c3) {$\cdzero$};
\draw (A1)++(0,\voff) node (aa1) {$0$};
\draw (A2)++(0,\voff) node (aa2) {$0$};
\draw (A3)++(0,\voff) node (aa3) {$0$};
\draw (C1)++(0,-\voff) node (cc1) {$0$};
\draw (C2)++(0,-\voff) node (cc2) {$0$};
\draw (C3)++(0,-\voff) node (cc3) {$0$};
\foreach \T/\H in {a1/A1, A1/A2, A2/A3, A3/a3, b1/B1, B1/B2, B2/B3, B3/b3, c1/C1, C1/C2, C2/C3, C3/c3,
  A1/B1, A2/B2, A3/B3, B1/C1, B2/C2, B3/C3, aa1/A1, aa2/A2, aa3/A3, C1/cc1, C2/cc2, C3/cc3}
  \draw [cdmap] (\T) to (\H);
\end{tikzpicture}
\] 
where the vertical short exact sequences are as in \eqref{eq:def-Kv}.
Note that we have used the fact that $\Ext_A^1(T_\ell,T_k)=0$ for all $\ell$.
Thus 
\begin{equation}\label{eq:kapTT1}
  \kappa(T_\ell, T'_k) =  \kappa(T_\ell, T_k) + \kappa(T_\ell, T^*_k).
\end{equation}
Similarly, applying $\Hom_A(-,T_\ell)$ to \eqref{eq:left-mutation}, we obtain 
\begin{equation}\label{eq:kapT2T}
   \kappa(T''_k,T_\ell) = \kappa(T_k,T_\ell) + \kappa(T^*_k,T_\ell) .
\end{equation}
In the case $\ell\neq k$, we also have $\Ext_A^1(T_\ell, T_k^*) =0= \Ext_A^1(T_k^*,T_\ell)$ 
and can equally apply $\Hom_A(T_\ell, -)$ to \eqref{eq:left-mutation} 
and $\Hom_A(-,T_\ell)$ to \eqref{eq:right-mutation} to obtain
\begin{align}
 \kappa(T_\ell, T''_k) &= \kappa(T_\ell, T_k) + \kappa(T_\ell, T^*_k), \label{eq:kapTT2} \\
 \kappa(T'_k,T_\ell) &= \kappa(T_k,T_\ell) + \kappa(T^*_k,T_\ell). \label{eq:kapT1T}
\end{align}
Combining \eqref{eq:kapTT1}--\eqref{eq:kapT1T} 
gives $\kappa(T_\ell, T'_k)= \kappa(T_\ell, T''_k)$ 
and $\kappa(T'_k,T_\ell)= \kappa(T''_k,T_\ell)$, 
and hence $\lambda(T'_k,T_\ell)= \lambda(T''_k,T_\ell)$, that is,
\[
 \sum_{j:b_{jk}<0} (-b_{jk}) \lambda(T_j, T_\ell) 
  = \sum_{j:b_{jk}>0} b_{jk} \lambda(T_j,T_\ell).
\]
In other words,
\[ 
  \sum_{j} b_{jk} \lambda_{j\ell} = 0,
\]
which is the $\ell\neq k$ part of the condition \eqref{eq:compat} that $\mat{B}$ and $\mat{L}$ are compatible. 

In the case $\ell=k$, following \cite[Prop 8.1]{GLS08} and \cite[Cor 4.6]{JKS}, we have 
\[\Ext_A^1(T_k, T_k^*) \isom \CC \isom \Ext_A^1(T_k^*,T_k).\]
So, when we apply $\Hom_A(-,T_k)$ and $\Hom_Z(-,e_0T_k)$ to \eqref{eq:right-mutation}, 
we obtain the following commutative diagram with exact rows and columns
\[
\begin{tikzpicture}[xscale=4.0, yscale=1.6]
\pgfmathsetmacro{\hoff}{0.6}
\pgfmathsetmacro{\voff}{0.7}
\draw (1,1) node (A1) {$\Hom_A(T_k^*, T_k)$};
\draw (2,1) node (A2) {$\Hom_A(T'_k, T_k)$};
\draw (3,1) node (A3) {$\Hom_A(T_k, T_k)$};
\draw (1,0) node (B1) {$\Hom_Z(e_0T_k^*, e_0T_k)$};
\draw (2,0) node (B2) {$\Hom_Z(e_0T'_k, e_0T_k)$};
\draw (3,0) node (B3) {$\Hom_Z(e_0T_k, e_0T_k)$};
\draw (1,-1) node (C1) {$\ker\psi$};
\draw (2,-1) node (C2) {$K(T'_k, T_k)$};
\draw (3,-1) node (C3) {$K(T_k, T_k)$};
\draw (A1)++(-\hoff,0) node (a1) {$\cdzero$};
\draw (A3)++(\hoff,0) node (a3) {$\CC_{\ph}$};
\draw (a3)++(0.25,0) node (ay) {$\cdzero$};
\draw (B1)++(-\hoff,0) node (b1) {$\cdzero$};
\draw (B3)++(\hoff,0) node (b3) {$\cdzero$};
\draw (C1)++(-\hoff,0) node (c1) {$\cdzero$};
\draw (A1)++(0,\voff) node (aa1) {$0$};
\draw (A2)++(0,\voff) node (aa2) {$0$};
\draw (A3)++(0,\voff) node (aa3) {$0$};
\draw (C2)++(0,-\voff) node (cc2) {$0$};
\draw (C3)++(0,-\voff) node (cc3) {$0$};
\foreach \T/\H in {a1/A1, A1/A2, A2/A3, A3/a3, a3/ay, b1/B1, B1/B2, B2/B3, B3/b3, c1/C1, C1/C2,
  A1/B1, A2/B2, A3/B3, B1/C1, B2/C2, B3/C3, aa1/A1, aa2/A2, aa3/A3, C2/cc2, C3/cc3}
  \draw [cdmap] (\T) to (\H);
\draw [cdmap] (C2) to node [above] {\maplab $\psi$} (C3);
\end{tikzpicture}
\] 
Applying the Snake Lemma to the middle two columns shows that $\psi$ is surjective and that there is a short exact sequence
\[
0 \to K(T_k^*, T_k) \to \ker\psi \to \CC \to 0.
\]
Hence
\begin{equation}\label{eq:kapT1Tk}
  \kappa(T'_k,T_k) = \kappa(T_k,T_k) + \kappa(T^*_k,T_k) +  1 .
\end{equation}
Similarly, applying $\Hom(T_k,-)$ to \eqref{eq:left-mutation}, we obtain 
\begin{equation}\label{eq:kapTkT2}
  \kappa(T_k, T''_k) = \kappa(T_k, T_k) + \kappa(T_k, T^*_k) +  1 .
\end{equation}
Using \eqref{eq:kapTT1} and \eqref{eq:kapT2T}, for $\ell=k$, 
together with \eqref{eq:kapT1Tk} and \eqref{eq:kapTkT2}, we obtain 
\begin{equation}\label{eq:compat}
  \sum_{j} b_{jk} \lambda_{jk} =  \lambda(T''_k,T_k) - \lambda(T'_k,T_k) = 2 ,
\end{equation}
which is the $\ell=k$ part of the compatibility condition \eqref{eq:compat}.

The equations \eqref{eq:kapTT1}--\eqref{eq:kapT1T} also show that, when $\ell\neq k$,
\begin{align*}
\label{eq:newlam}
 \lambda(T_k^*,T_\ell)+\lambda(T_k,T_\ell)
 &= \lambda(T'_k,T_\ell) = \sum_{j:b_{jk}<0} (-b_{jk}) \lambda(T_j,T_\ell) \\
 &= \lambda(T''_k,T_\ell) = \sum_{j:b_{jk}>0} b_{jk} \lambda(T_j,T_\ell).
\end{align*}
Comparing this with Remark~\ref{rem:compat}, 
we see that the mutated matrix $\mu_k(\mat{L})$ is defined by the function $\lambda$ applied to $\mu_k(T)$.
Together with Theorem \ref{thm:mutationmatrix}, this 
completes the proof that mutation of cluster tilting objects is consistent with mutation of seed data.
\end{proof}

We now show that, when $T=T_\maxNC$ as in \eqref{eq:TmaxNC}, our matrix $\mat{L}$ computes the quasi-commutation rules,
as in Theorem \ref{thm:LZ}, for the corresponding quantum minors.

\begin{lemma}\label{lem:kappa}
Suppose that $I$ and $J$ are non-crossing and that $J\setminus {I}=J'\cup J''$ so that $J'< (I\setminus J) <J''$,
i.e.~case (i) of Definition~\ref{def:non-crossing}. 
Then 
\[
  \kappa(M_I, M_J)=|J'| \quad\text{and}\quad \kappa(M_J, M_I)=|J''|.
\]
\end{lemma}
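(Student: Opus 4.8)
The plan is to compute $\kappa(M_I,M_J)$ and $\kappa(M_J,M_I)$ directly from the description of $\kappa$ as the dimension of $K_v(M,N)$, using the explicit model of rank one modules as representations of the quiver $Q$. Recall from the definition of $K_v$ in Lemma~\ref{lem:Kv} (and Remark~\ref{rem:altdef}) that $K_v(M,N)$ sits in the four-term exact sequence \eqref{eq:4termA}, so $\kappa(M,N)=\dim\Hom_A(M,J_vN)-\dim\Hom_A(M,N)$ when $\Ext^1(M,N)=0$, which holds here since $I$ and $J$ are non-crossing (Proposition~\ref{prop:nc}). Since all the relevant Hom-spaces are free $\CC[t]$-modules and become finite-dimensional after inverting $t$ only on the $J_v$-side, the cleanest route is to compute the $\CC[t]$-module structure of $\Hom_A(M_I,M_J)$ and of $\Hom_v(M_I,M_J)=\Hom_Z(Z,Z)=Z$, and read off the cokernel. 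Concretely, a homomorphism $M_I\to M_J$ is a tuple $(f_i)_{i\in Q_0}$ of elements of $Z=\CC[t]$ intertwining the $x_a,y_a$ actions; the intertwining conditions at each edge $a\in C_1$ force $t\mid f_{(a-1)}-f_{(a)}$ or similar divisibility, depending on whether $a$ lies in $I$, in $J$, in both, or in neither. Tracking these conditions around the circle, with the vertex $v=0$ as basepoint, one finds that $\Hom_A(M_I,M_J)\hookrightarrow Z$ (via $f\mapsto f_v$) has image $t^{d}\,Z$ for an explicit exponent $d$, whence $\kappa(M_I,M_J)=d$.

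The combinatorial heart of the argument is to identify this exponent $d$ with $|J'|$ (resp. $|J''|$). I would set up the bookkeeping as follows: travelling around the circle $C_1=\{1,\dots,n\}$ starting just after $v=0$, the "level" $f_i$ of a homomorphism $M_I\to M_J$ can only change (get multiplied or divided by $t$) as we cross an edge that is in exactly one of $I,J$ — that is, an edge in the symmetric difference $I\triangle J$. Since $M_I$ uses $1$ on $I$-edges and $t$ off them, and $M_J$ likewise for $J$, crossing an edge in $J\setminus I$ forces one behaviour and crossing an edge in $I\setminus J$ forces the opposite; the requirement that the tuple closes up consistently around the cycle, together with the normalisation $f_v\in Z$ (not just in $K$), pins down $d$ as the number of "descents" one accumulates, which by the interleaving pattern $J'<(I\setminus J)<J''$ is exactly the count of one block. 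The non-crossing hypothesis in case (i) is precisely what makes the $J'$-part and the $J''$-part of $J\setminus I$ sit on opposite sides of the whole of $I\setminus J$ in cyclic order based at $v$, so that no cancellation occurs and the two computations give $|J'|$ and $|J''|$ respectively.

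I expect the main obstacle to be the careful orientation-and-basepoint bookkeeping: making sure the divisibility conditions at the edges are recorded with the correct direction (they differ for $\Hom(M_I,M_J)$ versus $\Hom(M_J,M_I)$, which is exactly why the two answers are $|J'|$ and $|J''|$ rather than both the same), and confirming that the normalisation at $v$ is the one that produces $|J'|$ in the first case. A convenient way to make this rigorous and symmetric-looking is via the picture in Remark~\ref{rem:maxdiag}: $\pi M_J$ has the Ferrers diagram of $\partn{J}$, and $\kappa(M_I,M_J)$ measures how this diagram meets $\partn{I}$; in case (i) the relevant overlap degenerates to a single row/column count. One then just needs to check that the relevant skew shape $\partn{J}\setminus\partn{I}$ in this degenerate case is a disjoint union of a horizontal strip of length $|J'|$ and (from the other side) one of length $|J''|$, whose $\maxdiag$ values are $|J'|$ and $|J''|$ — though for a self-contained proof here I would prefer the direct quiver computation sketched above and only remark on the $\maxdiag$ interpretation afterwards.
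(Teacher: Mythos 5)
Your approach is essentially the same as the paper's: use the explicit presentation of $\rkone{I}$ as a quiver representation to describe $\Hom_A(M_I,M_J)$ as a rank-one free $\CC[t]$-module, track the exponent of $t$ at the chosen vertex $v$ around the circle (it changes only across edges in $I\triangle J$, going one way on $J\setminus I$ and the other on $I\setminus J$), and use the interleaving $J'<(I\setminus J)<J''$ plus minimality/nonnegativity of the exponent vector to pin the value at $v$ to $|J'|$ (resp.\ $|J''|$). One small slip to correct: the phrase $\kappa(M,N)=\dim\Hom_A(M,J_vN)-\dim\Hom_A(M,N)$ is a difference of two infinite dimensions and so is not meaningful as written; what you want, and what you in fact use a sentence later, is that the image of $\Hom_A(M_I,M_J)\hookrightarrow\Hom_v(M_I,M_J)\isom Z$ is $t^{\alpha_v}Z$, so the cokernel is $Z/t^{\alpha_v}Z$ and $\kappa=\alpha_v$.
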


\begin{proof}
By Remark 5.4 in \cite{JKS}, $\Hom_A(M_I, M_J)$ is generated by $t^{\alpha}$, 
where $\alpha\in \NN^{C_0}$ is the minimal exponent vector satisfying
\[
\alpha_{ha}-\alpha_{ta} = 
\begin{cases} 
   -1 & \text{if $a\in J\setminus I$,}\\
  +1 & \text{if $a\in I\setminus J$,}\\
    0 & \text{otherwise.}
\end{cases}
\]
Here $ha$ and $ta$ are the head and tail, respectively, of the arrow $x_a$. 
Thus $\alpha$ decreases by 1 on each edge in $J'$, 
then increases by 1 on each edge in $I\setminus J$, 
then decreases by 1 on each edge in $J''$.
To be minimal, $\alpha$ must be zero somewhere, 
which must be on the vertices between  $J'$ and $I\setminus J$.
Hence $\kappa(M_I, M_J)=\alpha_0=|J'|$.

Similarly, $\Hom_A(M_J, M_I)$ is generated by $t^{\beta}$, 
where $\beta$ increases by 1 on each edge in $J'$, then decreases by 1 on each edge in $I\setminus J$, then
increases by 1 on each edge in $J''$. 
Furthermore, $\beta$ must be zero on the vertices between  $I\setminus J$ and $J''$
and so $\kappa(M_J, M_I)=\beta_0=|J''|$.
\end{proof}

\begin{theorem} \label{thm:quasicomm}
Let $M_I$, $M_J$ be rank-1 modules with $I$ and $J$ non-crossing sets.
Then 
\[
  c(I,J) = \lambda(M_I, M_J) = \kappa(M_J, M_I)-\kappa(M_I, M_J)
\]
where $c(I,J)$, as in Definition~\ref{def:non-crossing},
is the exponent in the quasi-commutation rule of Theorem~\ref{thm:LZ}:
\[
 \qminor{I}\qminor{J}=q^{c(I,J)}\qminor{J}\qminor{I}.
\]
\end{theorem}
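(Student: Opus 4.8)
The plan is to reduce the general case to Lemma~\ref{lem:kappa} by exploiting the cyclic symmetry of $A$ and the symmetry of the statement. By Theorem~\ref{thm:LZ}, $c(I,J)$ is determined by the non-crossing data; by Lemma~\ref{lem:kappa}, so is $\lambda(M_I,M_J)$ in case (i) of Definition~\ref{def:non-crossing}. So the first step is simply to compute in case (i): from Lemma~\ref{lem:kappa} we have $\kappa(M_J,M_I)-\kappa(M_I,M_J)=|J''|-|J'|$, which is exactly $c(I,J)$ by the case-(i) formula in Definition~\ref{def:non-crossing}. That settles case (i) immediately.

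Next I would handle case (ii), where instead $I\setminus J=I'\cup I''$ with $I'<(J\setminus I)<I''$. The cleanest route is to observe that case (ii) for the ordered pair $(I,J)$ is case (i) for the pair $(J,I)$: indeed the roles of $I$ and $J$ are interchanged, with $I'$ playing the role of ``$J'$'' and $I''$ the role of ``$J''$''. Applying Lemma~\ref{lem:kappa} with the pair $(J,I)$ then gives $\kappa(M_J,M_I)=|I'|$ and $\kappa(M_I,M_J)=|I''|$, so $\kappa(M_J,M_I)-\kappa(M_I,M_J)=|I'|-|I''|=c(I,J)$, again matching Definition~\ref{def:non-crossing}. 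The only thing to check here is that the two computations agree on the overlap when \emph{both} conditions (i) and (ii) hold; but Definition~\ref{def:non-crossing} already records that the two formulae for $c(I,J)$ agree, and correspondingly both computations of $\kappa$ via Lemma~\ref{lem:kappa} produce the same integers, so there is no inconsistency.

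Finally, I would note that $\lambda(M_I,M_J)=\kappa(M_J,M_I)-\kappa(M_I,M_J)$ is just the definition \eqref{eq:lam-def}, and that the equivalence ``$I,J$ non-crossing $\iff\Ext^1(M_I,M_J)=0$'' is Proposition~\ref{prop:nc}, so the hypotheses are interchangeable as stated. The last assertion, that $c(I,J)$ is the exponent in the quasi-commutation rule, is precisely Theorem~\ref{thm:LZ}, so nothing further is needed. The only genuinely delicate point is the bookkeeping in Lemma~\ref{lem:kappa} itself—tracking where the minimal exponent vector $\alpha$ (resp.\ $\beta$) vanishes relative to the fixed vertex $v$—but that work has already been done; here one just needs to be careful that the ``between'' language in Lemma~\ref{lem:kappa} is interpreted with respect to the total order that the choice of $v$ imposes on $C_1$, and that $J'<(I\setminus J)<J''$ really does force $\alpha$ to its minimum (namely $|J'|$ at $v$) exactly between $J'$ and $I\setminus J$. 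I expect this alignment of conventions to be the main thing to get right; the rest is a short symmetry argument.
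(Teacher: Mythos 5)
Your proof is correct and follows the same route as the paper: reduce to case (i) of Definition~\ref{def:non-crossing} (the paper does this by a WLOG swap of $I$ and $J$, you spell out case (ii) explicitly as case (i) for the swapped pair) and then apply Lemma~\ref{lem:kappa}. The extra remarks about \eqref{eq:lam-def}, Proposition~\ref{prop:nc}, and Theorem~\ref{thm:LZ} are consistent with how the paper uses these facts, so nothing is missing.
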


\begin{proof}
Swapping the roles of $I$ and $J$ if necessary, 
we may assume that case (i) of Definition~\ref{def:non-crossing} holds 
and then this is an immediate application of Lemma~\ref{lem:kappa}.
\end{proof}

\section{A link with mirror symmetry}\label{sec:maxdiag}

There is a correspondence between (isomorphism classes of) 
rank one $A$-modules $M_J$ and Young diagrams $\lambda_J$ contained
in a rectangle of width $m$ and height $n-m$.
The labelling of the Young diagram $\lambda_J$
is by the west steps in the NE-SW walk that bounds the diagram,
as in \cite[\S2.3]{RW17}.
The labelling of the module $M_J$
is as in Section~\ref{sec:gcc}, that is, as in \cite[\S5]{JKS}.
Alternatively, by rotating the Young diagram into Russian orientation, i.e.~by 135 degrees,
we effectively obtain a picture of the module $\pi M_J$, where $\pi$ is as in \eqref{eq:pi-def-old}.
Put another way, the NE-SW walk
becomes the `contour' (cf.~\cite[\S6]{JKS}) of the module $M_J$
(see Figure~\ref{fig:russian} for an illustration).

\begin{figure}[h] 
\begin{tikzpicture} [scale=0.4,
upbdry/.style={thick, gray},
lowbdry/.style={thick, gray},
boxes/.style={thick, blue},
ridge/.style={very thick, red},
>={Stealth[inset=2.5pt,length=4.5pt,angle'=40,round]},
outarr/.style={->, gray},
midarr/.style={->,blue},
rdgarr/.style={->,red},
keyarr/.style={->,black},
outdot/.style={gray, fill= gray},
middot/.style={blue, fill=blue}]

\newcommand{\abit}{0.133}

\begin{scope} 
\draw [boxes] (0,-1)--(3,-1) (0,-2)--(1,-2) (1,0)--(1,-2) (2,0)--(2,-2);
\draw [upbdry] (0,-5)--(4,-5)--(4,0);
\draw [lowbdry] (0,-5)--(0,0)--(4,0);
\draw [ridge] (4,0)--(3,0)--++(0,-2)--++(-2,0)--++(0,-1)--++(-1,0)--(0,-5);
\draw (3.5,0) node [below=-2pt] {\tiny 1};
\draw (2.5,-2) node [below=-2pt] {\tiny 4};
\draw (1.5,-2) node [below=-2pt] {\tiny 5};
\draw (0.5,-3) node [below=-2pt] {\tiny 7};
\end{scope}

\begin{scope} [shift={(10,-5)},rotate=135]
\draw [boxes] (0,-1)--(3,-1) (0,-2)--(1,-2) (1,0)--(1,-2) (2,0)--(2,-2);
\draw [upbdry] (0,-5)--(4,-5)--(4,0);
\draw [lowbdry] (0,-5)--(0,0)--(4,0);
\draw [ridge] (4,0)--(3,0)--++(0,-2)--++(-2,0)--++(0,-1)--++(-1,0)--(0,-5);
\draw (3.5,0) node [above right=-3pt] {\tiny 1};
\draw (2.5,-2) node [above right=-3pt] {\tiny 4};
\draw (1.5,-2) node [above right=-3pt] {\tiny 5};
\draw (0.5,-3) node [above right=-3pt] {\tiny 7};
\end{scope}

\begin{scope} [shift={(20,-4)}, scale=0.7]

\draw [dashed] (-4,5)--(-4,-2) (5,-1)--(5,6);
\draw [dotted] (-4,-2)--++(1,1)--++(1,-1)--++(1,1)--++(1,-1)--++(1,1)--++(1,-1)--++(1,1)--++(1,-1)--++(1,1);

\foreach \x/\y in {4/0, 3/2, 2/2, 1/3}
 { \draw [rdgarr] (-\x+\y+\abit,\x+\y-\abit)--++(1-2*\abit,-1+2*\abit);
  \draw [thick,red] (-\x+\y+\abit,\x+\y-\abit)--++(1-3*\abit,-1+3*\abit);}
\foreach \x/\y in {3/1, 2/1, 1/2, 1/1}
  \draw [midarr] (-\x+\y+\abit,\x+\y-\abit)--++(1-2*\abit,-1+2*\abit);
\foreach \x/\y in {3/0, 3/-1, 2/0, 2/-1, 2/-2, 1/0, 1/-1, 0/4, 0/3, 0/2, 0/1, 0/0, -1/3, -1/2, -1/1, -2/2}
  \draw [outarr] (-\x+\y+\abit,\x+\y-\abit)--++(1-2*\abit,-1+2*\abit);
    
\foreach \x/\y in {3/1, 3/2, 1/3, 0/4, 0/5}
  {\draw [rdgarr] (-\x+\y-\abit,\x+\y-\abit)--++(-1+2*\abit,-1+2*\abit);
  \draw [thick,red] (-\x+\y-\abit,\x+\y-\abit)--++(-1+3*\abit,-1+3*\abit);}
\foreach \x/\y in {2/2, 2/1, 2/1, 1/2, 1/1}
  \draw [midarr] (-\x+\y-\abit,\x+\y-\abit)--++(-1+2*\abit,-1+2*\abit);
\foreach \x/\y in {3/0, 2/0, 2/-1, 1/0, 1/-1, 0/3, 0/2, 0/1, 0/0, -1/4, -1/3, -1/2, -1/1, -2/3, -2/2}
  \draw [outarr] (-\x+\y-\abit,\x+\y-\abit)--++(-1+2*\abit,-1+2*\abit);
  
\foreach \x/\y in {3/1, 3/2, 2/2, 2/1, 1/3, 1/2, 1/1}
  \draw [middot] (-\x+\y,\x+\y) circle (3pt);
\foreach \x/\y in {4/0, 3/0, 3/-1, 2/0, 2/-1, 2/-2, 1/0, 1/-1, 0/2, 0/1, 0/0, 0/3, 0/4, 0/5, -1/4, -1/3, -1/2, -1/1, -2/3, -2/2}
  \draw [outdot] (-\x+\y,\x+\y) circle (3pt);

\draw [keyarr] (7-\abit,4-\abit)--++(1-2*\abit,-1+2*\abit);
\draw [keyarr] (8-\abit,2-\abit)--++(-1+2*\abit,-1+2*\abit);
\draw (7.8,3.8) node {\tiny $x$};
\draw (7.8,1) node {\tiny $y$};
\end{scope}

\end{tikzpicture}
\caption{Young diagram $\lambda_{1457}$ (in English and Russian orientation)
and module $M_{1457}$, in the case $(\mm,\nn)=(4,9)$.}
\label{fig:russian}
\end{figure}

Recall from \cite[Def.~14.3]{RW17} the quantity 
\begin{equation}
 \mxd{I}{J}=\maxdiag(\partn{I} \setminus \partn{J}),
\end{equation}
which measures the maximum width (diagonally in English orientation or vertically in Russian orientation)
of the set-theoretic difference of the Young diagrams $\partn{I}$ and $\partn{J}$. 
For example, Figure~\ref{fig:maxdiag} shows that $\mxd{2389}{1457}=2$ 
(and also that $\mxd{1457}{2389})=1$).

\begin{lemma} \label{lem:kap=md}
For any $m$-subsets $I,J$, we have $\kappa(M_I,M_J) = \mxd{I}{J}$. 
\end{lemma}
\begin{proof}
Reformulating the definition, $\mxd{I}{J}$ is the maximum height of the contour of $M_I$ 
above the contour of $M_J$, when they are made to line up at the vertex~0.
In other words, it is the minimum distance we need 
to lower the contour of $M_I$ to be below the contour of $M_J$ (see Figure~\ref{fig:maxdiag} for an illustration).

On the other hand, recall from Definitions~\ref{def:Kv} and~\ref{def:kappa} that $\kappa(M_I,M_J)$ is the 
codimension of $\Hom_A(M_I,M_J)$ in $\Hom_Z(e_0 M_I,e_0 M_J)$, where the inclusion is via the restriction functor
$\funE\colon M\mapsto e_0M$, as in \eqref{eq:E}.
In this case, both Hom spaces are free rank one $Z$-modules.

Now the right-hand picture in Figure~\ref{fig:maxdiag} represents 
the embedding $\eta\colon M_I \hookrightarrow M_J$ that generates $\Hom_A(M_I,M_J)$,
while the height difference $h$ at the vertex 0 represents the fact that $\funE(\eta)=t^h \nu$,
where $\nu$ is the generator of $\Hom_Z(e_0 M_I,e_0 M_J)$.
Thus $h$ is the required codimension.
\end{proof}

As an extra observation, note that the left-hand picture in Figure~\ref{fig:maxdiag} represents 
embeddings $P_0\hookrightarrow M_I \hookrightarrow P_{\nn-\mm}$ and the same for $M_J$.
Two of these embeddings are the generators of $\Hom_A(M_I,P_{\nn-\mm})$ and $\Hom_A(P_0,M_J)$,
which are identified with~$\nu$ under the isomorphisms in Lemma ~\ref{lem:phi-equivs},
since $P_{\nn-\mm}=\funJ e_0 M_J$ and $P_0=\funP e_0 M_I$.
The other two embeddings are the natural maps $\alpha$ and $\beta$ of Lemma~\ref{lem:pi-om}.
This provides alternative ways to conclude the proof of Lemma~\ref{lem:kap=md}.

\begin{figure}[h]
\begin{tikzpicture}[scale=0.3,
  htarrow/.style={latex-latex, thick}]
  
\begin{scope}
\draw [dotted] (2,-2)--++(1,1) (3,-3)--++(5,5);
\foreach \j in {1,...,4}
  \draw [dotted] (2,-2)++(\j,\j)--++(2,-2);

\draw [gray, thick] (0,0)--++(4,-4)--++(5,5);
\draw [gray, thick] (0,0)--++(5,5)--++(4,-4);

\draw [teal, very thick] (0,0)
 --++(1,1)--++(1,-1)--++(1,-1)--++(1,1)--++(1,1)--++(1,1)--++(1,1)--++(1,-1)--++(1,-1);
\draw [red, very thick] (0,0)
 --++(1,-1)--++(1,1)--++(1,1)--++(1,-1)--++(1,-1)--++(1,1)--++(1,-1)--++(1,1)--++(1,1);

\draw [htarrow] (7,2.9)--++(0,-3.8);
\end{scope}

\begin{scope} [shift={(15,1)}]
\draw [dotted] (0,-2)--++(1,1) (1,-3)--++(3,3) (1,-5)--++(4,4);
\draw [dotted] (0,-4)--++(1,-1) (0,-2)--++(1,-1) (5,-1)--++(4,-4) (8,0)--++(1,-1);
\draw [dotted] (1,-1)--++(4,-4)--++(4,4) (2,0)--++(5,-5)--++(2,2);

\draw [teal, very thick] (0,-4)
 --++(1,1)--++(1,-1)--++(1,-1)--++(1,1)--++(1,1)--++(1,1)--++(1,1)--++(1,-1)--++(1,-1);
\draw [red, very thick] (0,0)
 --++(1,-1)--++(1,1)--++(1,1)--++(1,-1)--++(1,-1)--++(1,1)--++(1,-1)--++(1,1)--++(1,1);

\draw [htarrow] (0,-0.1)--++(0,-3.8);
\draw [htarrow] (9,1-0.1)--++(0,-3.8);
\end{scope}

\end{tikzpicture}
\caption{Demonstration that 
$\mxd{\textcolor{teal}{2389}}{\textcolor{red}{1457}} = 2 =
\kappa(M_{\textcolor{teal}{2389}},M_{\textcolor{red}{1457}})$.}
\label{fig:maxdiag}
\end{figure}

\begin{remark}
Combining Theorem~\ref{thm:quasicomm} and Lemma~\ref{lem:kap=md},
one may deduce that, if $I$ and $J$ non-crossing sets, then 
\begin{equation}
 q^{\mxd{I}{J}} \qminor{I} \qminor{J} = q^{\mxd{J}{I}} \qminor{J} \qminor{I}.
\end{equation}
Indeed, it is a simple combinatorial exercise to check this directly. 
In other words, in case (i) of Definition~\ref{def:non-crossing},
\[
\mxd{I}{J} = |J'|, \qquad
\mxd{J}{I} = |J''|,
\]
and a similar formula holds in case (ii).
This seems to be a new observation and an intriguing one,
given the known role that $\mxd{I}{J}$ plays in~\cite{RW17} and~\cite{FW}.
\end{remark}

\begin{remark}\label{rem:cluschar}
From its description as the cokernel of $\beta_*(M)$ in Lemma \ref{lem:phi-equivs}, 
we can see that $K(T, N)$ is an $\End_A(T)$-module,
for any $T\in\CM(A)$.
In particular, for $T_\maxNC$, as in \eqref{eq:TmaxNC}, 
$K(T_\maxNC, N)$ is a representation of the quiver
dual to a certain reduced plabic graph $G$ with faces labelled by the maximal noncrossing set $\maxNC$ 
(see \cite{BKM}, or \cite[Defs.~3.5 \& 3.8]{RW17} for the closely related quiver with boundary arrows omitted).
The dimension vector of this representation is given by 
\[ 
  \dimvec K(T_\maxNC, N) = (\kappa(M_J, N):J\in\maxNC).
\]
Thus we can interpret \cite[Cor~16.19]{RW17} together with Lemma~\ref{lem:kap=md} as saying that
\begin{equation}\label{eq:dim-val-min}
  \valG(\minor{I}) = \dimvec K(T_\maxNC, M_I),
\end{equation}
where $\valG$ is defined in \cite[Def~8.1]{RW17}
and $\minor{I}$ is the Pl\"ucker coordinate with label~$I$.
Note that the projective $P_0=M_{1\cdots m}$
at the vertex $0$ is always a summand of $T_\maxNC$ 
and $K(P_0, N)=0$ for any $N$. 
By convention, this trivial component is omitted from the definition of $\valG$.

In light of \eqref{eq:dim-val-min} and the fact that $\minor{I}$ is the cluster character of $M_I$,
it is natural to conjecture that, for any $N\in \CM(A)$,
 \begin{equation}\label{eq:dim-val-conj}
  \valG(\cluschar{N}) = \dimvec K(T_\maxNC, N),
\end{equation}
where $\cluschar{N}$ is the cluster character of $N$.

\end{remark}

\section{The Grassmannian cluster algebra} \label{sec:grCA}

To understand the quantum case, we start by recalling in some detail how the 
Grassmannian homogeneous coordinate ring $\CC[\Grr]$ becomes a (graded) cluster algebra.
We follow the original work of Scott~\cite{Sc06}, enhanced by the combinatorial fact,
now proved by Oh-Postnikov-Speyer~\cite[Thm 1.6]{OPS},
that every maximal non-crossing set $\maxNC$ (Def~\ref{def:non-crossing})
corresponds (one-to-one) to a certain type of Postnikov alternating strand diagram, or a 
reduced plabic graph $G_\maxNC$ or, dually, a quiver $Q_\maxNC$ and thus an exchange matrix $\mat{B}_\maxNC$.
The set $\maxNC$ canonically labels the faces of $G_\maxNC$, and hence the vertices of $Q_\maxNC$ and the rows and columns of $\mat{B}_\maxNC$ (see \cite[\S5]{Sc06}  or  \cite{BKM}, \cite{OPS} for details).

Then $X_\maxNC=\{X_I:I\in \maxNC\}$, together with the exchange matrix $\mat{B}_\maxNC$, is an initial seed for a cluster algebra $C(X_\maxNC,\mat{B}_\maxNC)$, which can be graded by giving all the $X_I$ degree 1, since the quiver $Q_\maxNC$ is balanced, in the sense of \cite[Lemma~2.1]{JKS}.

Now,  by \cite[Thm 1.4]{OPS}, 
any two such maximal non-crossing sets can be related by a sequence of mutations or {\it geometric exchanges}.
More precisely, suppose that $a,b,c,d$ are cyclically ordered indices and a maximal non-crossing set $\maxNC$ 
contains the minor labels $Jab$, $Jbc$, $Jcd$, $Jad$ and $Jac$, where 
$J$ is disjoint from $\{a,b,c,d\}$ and $Jab$ is short-hand for $J\cup\{a,b\}$, etc. 
Then there will be another maximal non-crossing set $\maxNC'$ in which $Jac$ is replaced by $Jbd$.
The local change in the corresponding alternating strand diagram and 
quiver is illustrated in Figure~\ref{fig:geomexch}.
Thus, the quiver $Q_{\maxNC'}$ is obtained from $Q_{\maxNC}$ by quiver mutation.

\begin{figure}
\begin{tikzpicture}[scale=0.6,
  strand/.style={teal,dashed,thick},
  quivarrow/.style={red, -latex, thick},
  doublearrow/.style={black, latex-latex, very thick}]
\newcommand{\strarrow}{\arrow{angle 60}}
\newcommand{\dotrad}{0.1cm} 
\newcommand{\bdrydotrad}{{0.8*\dotrad}} 

\draw [strand] plot coordinates {(4,-2) (1.8,-0.2) (0.8,0.8) (-0.2, 1.8) (-2,4)}[postaction=decorate,decoration={markings,
mark= at position 0.17 with \strarrow,
mark= at position 0.5 with \strarrow,
mark= at position 0.83 with \strarrow}];

\draw [strand] plot coordinates {(-4,2) (-1.8,0.2) (-0.8,-0.8) (0.2,-1.8) (2,-4)}[postaction=decorate,decoration={markings,
mark= at position 0.17 with \strarrow,
mark= at position 0.5 with \strarrow,
mark= at position 0.83 with \strarrow}];

\draw [strand] plot coordinates {(-4,-2) (-1.8,-0.2) (-0.8,0.8) (0.2,1.8) (2,4)}[postaction=decorate,decoration={markings,
mark= at position 0.17 with \strarrow,
mark= at position 0.5 with \strarrow,
mark= at position 0.83 with \strarrow}];

\draw [strand] plot coordinates {(4,2) (1.8,0.2) (0.8,-0.8) (-0.2,-1.8) (-2,-4)}[postaction=decorate,decoration={markings,
mark= at position 0.17 with \strarrow,
mark= at position 0.5 with \strarrow,
mark= at position 0.83 with \strarrow}];

\draw (4.3,-2.3) node {$b$};
\draw (-4.3,-2.3) node {$c$};
\draw (-4.3,2.3) node {$d$};
\draw (4.3,2.3) node {$a$};

\draw (0,0) node (ac) {$Jac$};
\draw (0,3.5) node (ab) {$Jab$};
\draw (3.5,0) node (bc) {$Jbc$};
\draw (0,-3.5) node (cd) {$Jcd$};
\draw (-3.5,0) node (ad) {$Jad$};

\draw [quivarrow] (ad)--(ac);
\draw [quivarrow] (bc)--(ac); 
\draw [quivarrow] (ac)--(cd); 
\draw [quivarrow] (ac)--(ab);

\begin{scope}[shift={(12,0)}]

\draw [strand] plot[smooth] coordinates {(4,-2) (2,-1.8) (0,-1.3) (-1.3,0) (-1.8,2) (-2,4)}[postaction=decorate,decoration={markings,
mark= at position 0.1 with \strarrow,
mark= at position 0.3 with \strarrow,
mark= at position 0.5 with \strarrow,
mark= at position 0.74 with \strarrow,
mark= at position 0.92 with \strarrow
}];

\draw [strand] plot[smooth] coordinates {(-4,2) (-2,1.8) (0,1.3) (1.3,0) (1.8,-2) (2,-4)}[postaction=decorate,decoration={markings,
mark= at position 0.1 with \strarrow,
mark= at position 0.3 with \strarrow,
mark= at position 0.5 with \strarrow,
mark= at position 0.74 with \strarrow,
mark= at position 0.92 with \strarrow}];

\draw [strand] plot[smooth] coordinates {(-4,-2) (-2,-1.8) (0,-1.3) (1.3,0) (1.8,2) (2,4)}[postaction=decorate,decoration={markings,
mark= at position 0.1 with \strarrow,
mark= at position 0.3 with \strarrow,
mark= at position 0.5 with \strarrow,
mark= at position 0.74 with \strarrow,
mark= at position 0.92 with \strarrow}];

\draw [strand] plot[smooth] coordinates {(4,2) (2,1.8) (0,1.3) (-1.3,0) (-1.8,-2) (-2,-4)}[postaction=decorate,decoration={markings,
mark= at position 0.1 with \strarrow,
mark= at position 0.3 with \strarrow,
mark= at position 0.5 with \strarrow,
mark= at position 0.74 with \strarrow,
mark= at position 0.92 with \strarrow}];

\draw (4.4,-2.1) node {$b$};
\draw (-4.4,-2.1) node {$c$};
\draw (-4.4,2.1) node {$d$};
\draw (4.4,2.1) node {$a$};

\draw (0,0) node (bd) {$Jbd$};
\draw (0,3.5) node (ab) {$Jab$};
\draw (3.5,0) node (bc) {$Jbc$};
\draw (0,-3.5) node (cd) {$Jcd$};
\draw (-3.5,0) node (ad) {$Jad$};

\draw [quivarrow] (bd)--(ad);
\draw [quivarrow] (bd)--(bc); 
\draw [quivarrow] (cd)--(bd); 
\draw [quivarrow] (ab)--(bd);

\draw [quivarrow] (ad)--(ab);
\draw [quivarrow] (bc)--(ab); 
\draw [quivarrow] (ad)--(cd);
\draw [quivarrow] (bc)--(cd); 
\end{scope}


\draw [doublearrow] (5.25,0) -- (6.75,0);

\end{tikzpicture}
\caption{A geometric exchange (cf.~\cite[Fig.~7]{Sc06})}
\label{fig:geomexch}
\end{figure}

As a consequence, we obtain a canonical isomorphism $C(X_\maxNC,\mat{B}_\maxNC)\isom C(X_{\maxNC'},\mat{B}_{\maxNC'})$ 
by identifying the $X_I$ for $I\in \maxNC\setminus Jac= \maxNC'\setminus Jbd$ and
identifying $X_{Jac}$ and $X_{Jac}^*$ with $X_{Jbd}^*$ and $X_{Jbd}$, respectively.
Thus we have a single cluster algebra in which $X_\maxNC$ and $X_{\maxNC'}$ are two clusters related by mutation.

For any $\maxNC$, there is a homogeneous map 
\[
  \clugr{\maxNC}\colon C(X_\maxNC,\mat{B}_\maxNC) \subseteq  \CC[X_I ^{\pm1}: I\in \maxNC] \to \CC(\Grr)
\]
where $\CC(\Grr)$ is the field of fractions of $\CC[\Grr]$, 
defined by sending each $X_I$ to the corresponding (classical) minor $\minor{I}$, for $I\in\maxNC$.
Since the local quiver for the mutation is as in Figure~\ref{fig:geomexch},
the exchange relation precisely matches the short Pl\"ucker relations between minors
\[
 \minor{Jac}\minor{Jbd} = \minor{Jab}\minor{Jcd} + \minor{Jad}\minor{Jbc}
\]
and so the maps $\clugr{\maxNC}$ and $\clugr{\maxNC'}$ conicide after making the canonical identification just described.
Since every label is in some maximal non-crossing set and all such sets are linked by mutation,
the image of $\clugr{\maxNC}$ contains all the minors and so generates $\CC(\Grr)$ as a field.
But the transcendence degree of $\CC(\Grr)$ is equal to $|\maxNC|$, 
so the minors $\{\minor{J} : J\in\maxNC\}$ must be algebraically independent.
Hence $\clugr{\maxNC}$ is an embedding and identifies $C(X_\maxNC,\mat{B}_\maxNC)$ with 
a single cluster algebra $C(\Grr)\subset \CC(\Grr)$, 
independent of $\maxNC$,
for which all the minors $\minor{J}$ are cluster variables,
so $\CC[\Grr]\subset C(\Grr)$.

It then takes a geometric argument (see \cite[Thm~3]{Sc06}),
applied to a special initial cluster $\{\minor{J} : J\in\maxNC\}$,
to show that this inclusion is an equality and hence $\CC[\Grr]$ is itself a cluster algebra.

\section{The Grassmannian quantum cluster algebra} \label{sec:grQCA}

We can now try to follow the above argument in the quantum case and prove the analogous results as far as possible.
We must start by bringing in the matrix $L_\maxNC$ 
given by the Leclerc-Zelevinsky quasi-commutation rules of Theorem~\ref{thm:LZ}.

\begin{proposition}\label{prop:BSLS}
For every maximal non-crossing set $\maxNC$, the pair  $(\mat{B}_\maxNC, \mat{L}_\maxNC)$ is compatible.
Furthermore, any two such pairs are related by mutation.
\end{proposition}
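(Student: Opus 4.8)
The plan is to deduce both assertions from the categorical results already established, by realising each maximal non-crossing set $\maxNC$ through its cluster tilting object $T_\maxNC = \bigoplus_{J\in\maxNC} \rkone{J}$ in $\CM(A)$, as in \eqref{eq:TmaxNC}. First I would observe that the exchange matrix $B_\maxNC$ coming from the quiver $Q_\maxNC$ (equivalently, from the plabic graph $G_\maxNC$) agrees with the matrix $B(T_\maxNC)$ built from irreducible maps in $\End_A(T_\maxNC)$; this is part of the categorification in \cite{JKS} (the Gabriel quiver of $\End(T_\maxNC)$ is $Q_\maxNC$), together with Proposition~\ref{2cycles} guaranteeing no loops or $2$-cycles so that the formula \eqref{eq:BfromT} genuinely records arrow numbers. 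Next I would identify $L_\maxNC$, the matrix recording the Leclerc--Zelevinsky exponents $c(I,J)$ for $I,J\in\maxNC$, with the matrix $L(T_\maxNC) = (\lambda_{ij})$ of \eqref{eq:LfromT}: by Theorem~\ref{thm:quasicomm} we have exactly $c(I,J) = \lambda(\rkone{I},\rkone{J})$ whenever $I,J$ are non-crossing, which holds for all pairs in $\maxNC$ by Proposition~\ref{prop:nc}. Hence $(B_\maxNC, L_\maxNC) = (B(T_\maxNC), L(T_\maxNC))$.

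With this identification in hand, the compatibility of $(B_\maxNC, L_\maxNC)$ is immediate from the first part of Theorem~\ref{thm:main}, which asserts precisely that $B(T)$ and $L(T)$ are compatible for any cluster tilting object $T$ in $\CM(A)$. For the second assertion, I would invoke \cite[Thm~1.4]{OPS}: any two maximal non-crossing sets $\maxNC$ and $\maxNC'$ are linked by a finite sequence of geometric exchanges, each of which is the combinatorial shadow of a mutation $\mu_k$ of cluster tilting objects in $\CM(A)$ (this compatibility between geometric exchange of plabic graphs and mutation in $\CM(A)$ is part of the setup recalled in Section~\ref{sec:grCA} and \cite{JKS}). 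So $T_{\maxNC'}$ is obtained from $T_\maxNC$ by the corresponding sequence of mutations $\mu_k$. Then the second part of Theorem~\ref{thm:main} — that the pair associated to $\mu_k(T)$ is $\mu_k(B(T), L(T))$ — applied iteratively gives that $(B_{\maxNC'}, L_{\maxNC'})$ is obtained from $(B_\maxNC, L_\maxNC)$ by the same sequence of matrix mutations, which is exactly the claim.

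The one point needing a little care — and the place I expect is the main (if minor) obstacle — is checking that a single geometric exchange, in the precise form described around Figure~\ref{fig:geomexch} (replacing $Jac$ by $Jbd$ in $\maxNC$), corresponds to mutation $\mu_k$ of $T_\maxNC$ at the summand $\rkone{Jac}$, i.e.\ that $\mu_k(\rkone{Jac}) = \rkone{Jbd}$ with the two exchange sequences \eqref{eq:left-mutation} and \eqref{eq:right-mutation} being the two short exact sequences coming from the short Pl\"ucker relation. This is essentially Proposition~5.6 and Remark~5.7 of \cite{JKS} together with the observation that the local quiver at $Jac$ is the standard square, so it is a matter of quoting the right statements rather than proving something new; but it is where the bridge between the purely combinatorial mutation of non-crossing sets and the module-theoretic mutation $\mu_k$ is actually made, and it should be spelled out. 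Everything else is then a formal consequence of Theorems~\ref{thm:quasicomm} and~\ref{thm:main}.
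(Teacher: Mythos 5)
Your proof is correct and follows the same strategy as the paper: realise $\maxNC$ via the cluster tilting object $T_\maxNC$, identify $(B_\maxNC, L_\maxNC)$ with $(B(T_\maxNC), L(T_\maxNC))$, and then quote Theorem~\ref{thm:main} for both compatibility and mutation consistency, with \cite[Thm~1.4]{OPS} providing connectivity of the exchange graph of maximal non-crossing sets.

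Two small remarks. First, the identification $B_\maxNC = B(T_\maxNC)$ is precisely \cite[Thm~10.3]{BKM} (the dimer model result), rather than something contained in \cite{JKS}; that is the reference the paper uses, and it is the cleanest statement of exactly this fact. Second, and more substantively, the ``one point needing a little care'' you flag can be handled much more economically than by verifying that $\mu_k(\rkone{Jac}) = \rkone{Jbd}$ via the explicit exchange sequences. Since a geometric exchange replaces exactly one element of $\maxNC$ to produce $\maxNC'$, the cluster tilting objects $T_\maxNC$ and $T_{\maxNC'}$ are two cluster tilting objects that share all but one indecomposable summand; by uniqueness of complements for almost-complete cluster tilting objects (\cite[Prop~4.5]{GLS06b}), $T_{\maxNC'}$ is forced to be the mutation $\mu_k(T_\maxNC)$ at that summand. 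No inspection of the local quiver or the Pl\"ucker short exact sequences is needed; that explicit matching is a separate point, which the paper defers to Proposition~\ref{prop:newcv=minor} where it is actually required.
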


\begin{proof}
In the category $\CM(A)$, we have a cluster tilting object $T_\maxNC$, as in \eqref{eq:TmaxNC}.
By \cite[Thm~10.3]{BKM}, we have $\mat{B}_\maxNC=\mat{B}(T_\maxNC)$ and, by Theorem~\ref{thm:quasicomm},
we have $\mat{L}_\maxNC=\mat{L}(T_\maxNC)$.
Thus, by the first part of Theorem~\ref{thm:main}, $\mat{B}_\maxNC$ and $\mat{L}_\maxNC$ are compatible.

Given a geometric exchange from $\maxNC$ to $\maxNC'$, the two cluster tilting objects $T_\maxNC$
and $T_{\maxNC'}$ differ by only one indecomposable summand and so must be related
by the mutation of cluster tilting objects(cf.~\cite[Prop 4.5]{GLS06b}).
Thus \cite[Thm 1.4]{OPS} implies that any two such $T_\maxNC$ are related by mutation of cluster tilting objects,
and hence the corresponding pairs $(\mat{B}_\maxNC, \mat{L}_\maxNC)$ are also related by mutation of such pairs,
by the second part of Theorem~\ref{thm:main}.
\end{proof}

As a consequence of this proposition, any maximal non-crossing set $\maxNC$ determines a quantum seed 
$\Phi_\maxNC=(X_\maxNC, \mat{B}_\maxNC, \mat{L}_\maxNC)$ and thus a quantum cluster algebra 
$C_q(\Phi_\maxNC)\subseteq\cT(\mat{L}_\maxNC)$, as in Definition~\ref{def:qca},
which can be graded, for the same combinatorial reasons as the classical case, by giving the initial variables degree 1.
As in the classical case, we have a canonical isomorphism $C_q(\Phi_\maxNC)\isom C_q(\Phi_{\maxNC'})$,
when $\maxNC$ and $\maxNC'$ are related by a geometric exchange.

We now bring back the quantum Grassmannian $\CC_q[\Grr]$, 
as in Definition~\ref{def:qgrass} but with scalars extended from $\qbasering$ to $\basering$.
We denote its skew field of fractions by $\cF(\Grr)$.
The first step in comparing $C_q(\Phi_\maxNC)$ to $\CC_q[\Grr]$ is the following.

\begin{lemma}
\label{lem:two-tori}
The canonical map $\clugr{\maxNC}\colon \cT(\mat{L}_\maxNC)\to \cF(\Grr) \colon X_I\mapsto \qminor{I}$ is injective.
\end{lemma}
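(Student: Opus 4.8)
The plan is to exhibit $\cT(L_\maxNC)$ as a based quantum torus with free $\basering$-basis the Laurent monomials $X^\a$ ($\a\in\ZZ^\maxNC$), as recalled after Definition~\ref{def:qtorus}, and to show that $\clugr{\maxNC}$ sends this basis to a $\basering$-linearly independent set in $\cF(\Grr)$. Since the $\qminor{I}$, $I\in\maxNC$, quasi-commute (Theorem~\ref{thm:LZ}), the assignment $X_I\mapsto\qminor{I}$ does extend to a $\basering$-algebra homomorphism $\cT(L_\maxNC)\to\cF(\Grr)$: one only has to check that the defining relations $X_IX_J=q^{\lambda_{IJ}}X_JX_I$ and the invertibility of each $X_I$ are respected, and the first holds by Theorem~\ref{thm:quasicomm} (which identifies $\lambda_{IJ}$ with $c(I,J)$), while the second holds because $\cF(\Grr)$ is a skew field in which each nonzero $\qminor{I}$ is invertible. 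The image of the monomial $X^\a$ is then, up to a power of $\vble$, an ordered product $\prod_{I\in\maxNC}\qminor{I}^{a_I}$ inside $\cF(\Grr)$.

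Next I would reduce linear independence to the classical statement. Specialising at $\vble=1$, the algebra $\CC_q[\Grr]$ becomes $\CC[\Grr]$ (Theorem~\ref{thm:defgr}), and the ordered monomials in the classical minors $\{\minor{I}:I\in\maxNC\}$ are linearly independent over $\CC$, because $\clugr{\maxNC}$ is an \emph{embedding} of the classical cluster algebra $C(X_\maxNC,B_\maxNC)$ into $\CC(\Grr)$ — this is exactly the content of Section~\ref{sec:grCA}, where it is shown that the minors in a maximal non-crossing set are algebraically independent, so in particular the Laurent monomials in them form a $\CC$-basis of a Laurent polynomial subring of $\CC(\Grr)$. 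Thus the corresponding set of monomials in the $\qminor{I}$ is linearly independent modulo $(\vble-1)$.

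The final step is a standard lifting argument over the principal ideal domain $\basering$. Suppose $\sum_{\a}c_\a\,\clugr{\maxNC}(X^\a)=0$ in $\cF(\Grr)$ with $c_\a\in\basering$ not all zero; clearing denominators I may assume finitely many terms and, dividing through by the highest power of $(\vble-1)$ dividing all the $c_\a$, I may assume some $c_\a$ is a unit modulo $(\vble-1)$. Reducing mod $(\vble-1)$ then gives a nontrivial $\CC$-linear dependence among the classical monomials $\prod_I\minor{I}^{a_I}$ — here I use that each $\clugr{\maxNC}(X^\a)$ lies in $\CC_q[\Grr]$ (or at least in a $\basering$-lattice on which reduction is well-behaved) so that reduction commutes with the linear combination — contradicting the classical independence just noted. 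The main obstacle is precisely this compatibility between reduction mod $(\vble-1)$ and the relation in $\cF(\Grr)$: a priori the monomials $\clugr{\maxNC}(X^\a)$ live in the skew field $\cF(\Grr)$, not manifestly in a well-behaved $\basering$-form, so one must either work inside $\CC_q[\Grr]$ itself (where Theorem~\ref{thm:defgr} supplies a free $\basering$-module structure with good specialisation) after clearing denominators by a single common monomial factor, or invoke the flat-deformation statement to guarantee that a $\basering$-linear relation among these elements reduces to a genuine $\CC$-linear relation at $\vble=1$. Once that bookkeeping is set up, injectivity of $\clugr{\maxNC}$ on $\cT(L_\maxNC)$ follows immediately.
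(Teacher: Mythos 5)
Your proposal is correct and follows essentially the same route as the paper: reduce to the quasi\nobreakdash-commuting (Laurent) polynomial subalgebra, observe that after clearing denominators the image lands in $\CC_q[\Grr]$, which is free over the PID $\basering$ with finite-rank graded pieces (Theorem~\ref{thm:defgr}), and conclude injectivity from the classical algebraic independence of the minors $\{\minor{I}:I\in\maxNC\}$ upon specialising $\vble=1$. The ``bookkeeping'' issue you flag is exactly what the paper resolves by the freeness remark: since the image of $\basering[X_I:I\in\maxNC]$ is a submodule of the free module $\CC_q[\Grr]$ over a PID, it is itself free, so the short exact sequence $0\to\ker\to\basering[X_I]\to\img\to 0$ stays exact after $\otimes_{\basering}\CC$, and injectivity of the specialised map forces $\ker\otimes\CC=0$ and hence $\ker=0$.
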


\begin{proof}
Note that this map is well-defined, because, by choice, $\mat{L}_\maxNC$ gives the quasi-commutation rules for the $\qminor{I}$ with $I\in\maxNC$.
It suffices to prove that the restriction to the quasi-commuting polynomial subalgebra 
$\basering[X_I : I\in \maxNC]$ is injective.
The image of this restriction lies in $\CC_q[\Grr]$, which is free over $\basering$ with finite rank graded pieces.
Hence the image is free and, since the specialisation to $\vble=1$ is injective, the original map is injective. 
\end{proof}

We claim that the (restricted) maps $\clugr{\maxNC}\colon C_q(\Phi_\maxNC)\to \cF(\Grr)$ are all compatible,
as in the classical case, 
and hence identify each $C_q(\Phi_\maxNC)$ with a single quantum cluster algebra $\grQCA\subseteq\cF(\Grr)$,
which contains clusters of quantum minors $\{\qminor{I} : I\in \maxNC \}$, for every maximal non-crossing set $\maxNC$.
This claim is true because the quantum exchange relation matches precisely with the 
corresponding short quantum Pl\"ucker relation:

\begin{proposition}
\label{prop:newcv=minor} 
Let $a, b, c, d$ be four cyclically ordered indices and $J$ be disjoint from $\{a, b, c, d\}$. 
Let $\clugr{\maxNC}\colon C_q(\Phi_\maxNC)\to \cF(\Grr)$ be the embedding associated to
some maximal non-crossing set $\maxNC$, which contains $Jab, Jbc, Jcd, Jad, Jac$.
Suppose  $\maxNC'$ is obtained by a geometric exchange centred at $Jac$.
Then $\clugr{\maxNC}(X^*_{Jac})=\qminor{Jbd}$.
Thus, given the canonical isomorphism $C_q(\Phi_\maxNC)\isom C_q(\Phi_{\maxNC'})$,
the maps $\clugr{\maxNC'}$ and $\clugr{\maxNC}$ coincide.
\end{proposition}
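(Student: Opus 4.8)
The plan is to verify two things: first, that $\clugr{\maxNC}$ does send the new cluster variable $X^*_{Jac}$ to the quantum Plücker coordinate $\qminor{Jbd}$; and second, that once this is known the two embeddings agree under the canonical identification. The second point is essentially formal: the canonical isomorphism $C_q(\Phi_\maxNC)\isom C_q(\Phi_{\maxNC'})$ is defined by fixing $X_I$ for $I\in\maxNC\cap\maxNC'$ and swapping $X_{Jac}\leftrightarrow X^*_{Jbd}$ and $X^*_{Jac}\leftrightarrow X_{Jbd}$. Since $\clugr{\maxNC}$ and $\clugr{\maxNC'}$ both send the common variables $X_I$ to $\qminor{I}$, send $X_{Jbd}$ (resp. $X_{Jac}$) to $\qminor{Jbd}$ (resp. $\qminor{Jac}$) by definition, once we check $\clugr{\maxNC}(X^*_{Jac})=\qminor{Jbd}$ and, symmetrically, $\clugr{\maxNC'}(X^*_{Jbd})=\qminor{Jac}$, the maps agree on all of one cluster and hence on the whole algebra. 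So the real content is the first claim.

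To prove $\clugr{\maxNC}(X^*_{Jac})=\qminor{Jbd}$, I would write out the quantum exchange relation \eqref{eq:qu-exch} for the direction $k$ corresponding to the vertex labelled $Jac$ in $Q_\maxNC$. By Figure~\ref{fig:geomexch}, the arrows at that vertex are $Jad\to Jac$, $Jbc\to Jac$, $Jac\to Jab$, $Jac\to Jcd$, so the incoming/outgoing neighbours are exactly the four labels $Jab, Jbc, Jcd, Jad$, each with multiplicity one. Hence $X^*_{Jac} = X^{\a'} + X^{\a''}$ where one monomial is (a $q$-power times) $X_{Jac}^{-1}X_{Jad}X_{Jbc}$ and the other is $X_{Jac}^{-1}X_{Jab}X_{Jcd}$. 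Applying $\clugr{\maxNC}$ turns this into a $\basering$-linear combination, with explicit $q$-power coefficients, of $\qminor{Jac}^{-1}\qminor{Jad}\qminor{Jbc}$ and $\qminor{Jac}^{-1}\qminor{Jab}\qminor{Jcd}$. The target identity is then exactly the short quantum Plücker relation
\[
 \qminor{Jac}\qminor{Jbd} = q^{\alpha}\qminor{Jab}\qminor{Jcd} + q^{\beta}\qminor{Jad}\qminor{Jbc}
\]
for the appropriate integers $\alpha,\beta$ — i.e. I need to quote the quantum Plücker relations (from \cite{LL} or the quantum-matrix straightening law) and match exponents.

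The main obstacle is bookkeeping of the $q$-powers: the classical exchange relation matches the classical Plücker relation on the nose, but in the quantum setting the exponent $\gamma(\a)$ in \eqref{eq:qu-monom}, the $q$-powers produced when moving $\qminor{Jac}^{-1}$ past the other minors using the Leclerc–Zelevinsky rule (Theorem~\ref{thm:LZ}), and the $q$-powers in the quantum Plücker relation must all be reconciled. I would handle this by exploiting the grading: both $X^*_{Jac}$ and $\qminor{Jbd}$ are homogeneous of degree $1$, and both the quantum exchange relation and the quantum Plücker relation are homogeneous, so it suffices to compare the two monomials up to scalar and then pin down the scalar by specialising at $\vble = 1$ (where $\clugr{\maxNC}(X^*_{Jac})$ becomes the classical minor $\minor{Jbd}$, using the classical case of Section~\ref{sec:grCA} together with Theorem~\ref{thm:defgr} and Lemma~\ref{lem:two-tori}, which guarantee the specialisation map is injective and flat). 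Concretely: work in $\cF(\Grr)$, show the difference $\clugr{\maxNC}(X^*_{Jac}) - \qminor{Jbd}$ lies in a free $\basering$-module (a graded piece of $\grQCA\cap\CC_q[\Grr]$) and vanishes at $\vble=1$, hence is zero — this avoids computing every exponent explicitly while still giving the exact identity. The one genuinely necessary input is that the coefficients appearing are the \emph{correct} powers of $q$ rather than $\vble$, which follows because all the quasi-commutation exponents $c(I,J)$ and the quantum Plücker exponents are integers, so no half-integer powers intervene.
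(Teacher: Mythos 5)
Your setup is correct and matches the paper's: the exchange relation for $X^*_{Jac}$ has the two monomials $X_{Jac}^{-1}X_{Jad}X_{Jbc}$ and $X_{Jac}^{-1}X_{Jab}X_{Jcd}$ with $q$-power coefficients given by $\gamma(\a')$, $\gamma(\a'')$, and the target is to match these against the short quantum Pl\"ucker relation. Your second paragraph (deducing that $\clugr{\maxNC}$ and $\clugr{\maxNC'}$ agree once $X^*_{Jac}\mapsto\qminor{Jbd}$ is established) is fine and essentially formal.

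However, your attempt to dodge the exponent bookkeeping has a genuine logical gap. You propose to show the difference $z=\clugr{\maxNC}(X^*_{Jac})-\qminor{Jbd}$ lies in a free $\basering$-module and ``vanishes at $\vble=1$, hence is zero.'' That inference is invalid: in a free $\basering$-module, an element can perfectly well vanish after specialisation at $\vble=1$ without being zero --- the element $q-1$ itself is the obvious counterexample, and more to the point so is $(q^{\alpha}-q^{\beta})\cdot(\text{monomial})$ for any distinct integers $\alpha,\beta$. This is exactly the kind of discrepancy you need to rule out, so the specialisation tells you nothing. (Note also that the injectivity in Lemma~\ref{lem:two-tori} is injectivity of the map $\clugr{\maxNC}$, not injectivity of the specialisation $M\to M/(\vble-1)M$, which always has nonzero kernel; these are different statements.) Since the coefficients on both sides are a priori arbitrary integer powers of $q$, all specialising to $1$, the only way to pin them down is to compute them. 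That is precisely what the paper's proof does: it splits into the two index orderings $a<b<c<d$ and $d<a<b<c$ (the quantum Pl\"ucker relations \eqref{eq:qP1}, \eqref{eq:qP2} take different forms in the two cases because of the noncommutativity), evaluates $\gamma(\a')$ and $\gamma(\a'')$ explicitly using the $c(I,J)$ values, and checks each exponent against the Pl\"ucker relation. There does not appear to be a shortcut that avoids this computation without importing some additional rigidity (e.g. bar-invariance), which you do not invoke.
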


\begin{proof}
The indices $a,b,c,d$ are cyclically ordered and, 
without loss of generality, we can assume that $a<c$.
Hence the actual ordering of the indices is either $a<b<c<d$ or $d<a<b<c$.
In these two cases, the short quantum Pl\"ucker relation (cf.~\cite{KLR}) can be written, respectively,
\begin{align}
\label{eq:qP1}
\qminor{Jbd} = q^{-1} \qminor{Jac}^{-1}\qminor{Jab}^{~}\qminor{Jcd}^{~}
            + q \qminor{Jac}^{-1}\qminor{Jad}^{~}\qminor{Jbc}^{~},\\
\label{eq:qP2}
\qminor{Jbd} = q^{-1} \qminor{Jad}^{~}\qminor{Jbc}^{~}\qminor{Jac}^{-1}
            + q\qminor{Jcd}^{~}\qminor{Jab}^{~}\qminor{Jac}^{-1}.
\end{align}
We want to show that the right-hand sides are precisely what appears in the 
quantum exchange relation \eqref{eq:qu-exch} for $X^*_{Jac}$,
after identifying $X_I$ with $\qminor{I}$ for $I\in\maxNC$.
In other words, each right-hand side term is a quantum monomial as in \eqref{eq:qu-monom}:
\begin{equation*}
X^{\a}=q^{\gamma(\a)}X_1^{a_1}\dots X_{\cn}^{a_{\cn}},
\end{equation*}
where
\begin{equation*}
 \gamma(\a) = \sfrac12 \sum_{i>j}a_ia_j\lambda_{ij}
\end{equation*}
and $\mat{L}=(\lambda_{ij})$ is the matrix of quasicommutation exponents.
In this case, the entries of this matrix are $c(I_i,I_j)$, as in Definition~\ref{def:non-crossing}.
The precise calculation depends on the order of the indices $a,b,c,d$.
In the case $a<b<c<d$, the first term exponent is
\[
 \gamma= \sfrac12 \bigl( c(Jcd,Jab) - c(Jab,Jac) - c(Jcd,Jac)\bigr) =\sfrac12 (-2-1+1) = -1
\]
and the second term exponent is 
\[
 \gamma=\sfrac12 \bigl( c(Jbc,Jad) - c(Jad,Jac) - c(Jbc,Jac)\bigr) =\sfrac12 (0+1+1) = 1,
\]
as required.
In the case $d<a<b<c$, 
the first term exponent is
\[
 \gamma=\sfrac12 \bigl( c(Jbc,Jad) - c(Jac,Jad) - c(Jac,Jbc)\bigr) = \sfrac12 (-2+1-1) = -1
\]
and the second term exponent is 
\[
 \gamma=\sfrac12 \bigl( c(Jab,Jcd) - c(Jac,Jcd) - c(Jac,Jab)\bigr) = \sfrac12 (0+1+1) = 1.
\]
Each exponent agrees with that in \eqref{eq:qP1} or \eqref{eq:qP2}, as required.
\end{proof}

Because $\grQCA$ contains all quantum minors, we have
\begin{equation}
\label{eq:gr-inc}
\CC_q[\Grr]\subseteq \grQCA
\end{equation}
and consequently the subfield of $\cF(\Grr)$ generated by $\grQCA$ is the whole of $\cF(\Grr)$.

In the quantum case, we do not know any geometric argument to show that the inclusion \eqref{eq:gr-inc}
is an equality. Indeed, all we can prove is the following.

\begin{theorem} 
\label{thm:generic-isom}
The inclusion \eqref{eq:gr-inc} induces an isomorphism 
\[
 \CC_q[\Grr]\otimes_{\basering} \genfld 
 \simeq \grQCA \otimes_{\basering}\genfld.
\]
\end{theorem}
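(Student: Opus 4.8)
The plan is to mimic the strategy used by Geiss--Leclerc--Schröer in \cite{GLS13} for open cells, but carried out in the homogeneous (graded) setting, exactly as announced after the statement. We already have the inclusion \eqref{eq:gr-inc} of $\basering$-algebras, so after extending scalars to $\genfld$ we get an inclusion $\CC_q[\Grr]\otimes_{\basering}\genfld \hookrightarrow \grQCA\otimes_{\basering}\genfld$; the task is to show it is surjective, i.e. that every cluster variable $X_I$ of $\grQCA$ already lies (generically) in $\CC_q[\Grr]$. Since $\CC_q[\Grr]$ contains all the quantum minors $\qminor{I}$, and these are identified with $X_I$ for $I$ ranging over the various maximal non-crossing sets $\maxNC$, it suffices to show that \emph{every} cluster variable of $C_q(\Phi_\maxNC)$ is such a quantum minor; equivalently, that every indecomposable reachable rigid object in the relevant cluster structure is of the form $M_I$. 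But that is precisely the content of the classical Grassmannian cluster structure recalled in Section~\ref{sec:grCA}: the cluster variables of $C(\Grr)$ are exactly the minors $\minor{I}$. So the real job is to transfer this completeness statement from the classical to the quantum side.

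The mechanism for the transfer is the flat-deformation/specialisation package. First I would note that $C(X_\maxNC, B_\maxNC) = C(\Grr) = \CC[\Grr]$ is a $\ZZ$-graded cluster algebra with finite-dimensional graded pieces (all $X_I$ in degree $1$) and, by the equality $\CC[\Grr]=C(\Grr)$ from \cite[Thm~3]{Sc06}, it coincides with its upper cluster algebra $U(X_\maxNC,B_\maxNC)$ (a Grassmannian coordinate ring is normal, so the standard argument that the cluster algebra equals the upper cluster algebra applies; alternatively cite this directly). Hence Theorem~\ref{thm:GLSflatdef} applies to the quantum seed $\Phi_\maxNC=(X_\maxNC,B_\maxNC,L_\maxNC)$, giving $C_q(\Phi_\maxNC)\otimes_{\basering}\CC = C(\Grr)$ at $\vble=1$, and by Remark~\ref{rem:GLSflatdef} $C_q(\Phi_\maxNC)=\grQCA$ is graded with each graded piece a free $\basering$-module of finite rank equal to the dimension of the corresponding graded piece of $\CC[\Grr]$. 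On the other hand, by Theorem~\ref{thm:defgr}, $\CC_q[\Grr]$ (now with scalars in $\basering$) is also graded with each graded piece free of the \emph{same} finite rank. The inclusion \eqref{eq:gr-inc} is a graded inclusion of free $\basering$-modules which is an equality after setting $\vble=1$ (both specialise to $\CC[\Grr]=C(\Grr)$). A graded inclusion of free modules of equal finite rank over $\basering$, that becomes an isomorphism modulo $(\vble-1)$, need not be an isomorphism over $\basering$ itself — the cokernel is a finite-length torsion $\basering$-module supported away from $\vble=1$ — but it does become an isomorphism after inverting, i.e. after $\otimes_{\basering}\genfld$ (equivalently, after localising so that the cokernel, being torsion, dies). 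This gives exactly the claimed isomorphism.

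The main obstacle, and the point that needs genuine care rather than bookkeeping, is verifying the hypotheses of Theorem~\ref{thm:GLSflatdef} in the graded/homogeneous situation: namely that $\CC[\Grr]$, as a cluster algebra with the grading in which all $\minor{I}$ have degree $1$, really does have finite-dimensional graded components and coincides with its upper cluster algebra. The first is a consequence of $\CC[\Grr]$ being a finitely generated graded domain (it is the homogeneous coordinate ring of a projective variety). The second is the classical "$\mathcal A = \mathcal U$" statement for the Grassmannian: this follows from \cite[Thm~3]{Sc06} (which identifies $\CC[\Grr]$ with $C(\Grr)$) together with the fact that $\CC[\Grr]$ is a normal domain, so Theorem~\ref{thm:GLSflatdef} is applicable. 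Once this is in place, everything else — the rank comparison between the two flat deformations, and the observation that an injection of equal-rank free graded modules over the PID $\basering$ that is bijective at $\vble=1$ becomes bijective over $\genfld$ — is routine. It is worth remarking, as the paragraph after the statement does, that this reproves \cite{GL} by a more direct route, avoiding the delicate homogenisation of \cite{GLS13} and instead quoting the clean flat-deformation statement of \cite{GLS18}.
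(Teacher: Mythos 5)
Your second paragraph reproduces the paper's proof: both $\CC_q[\Grr]$ and $\grQCA$ are flat deformations of $\CC[\Grr]$ --- the former by Theorem~\ref{thm:defgr}, the latter by Theorem~\ref{thm:GLSflatdef} together with Remark~\ref{rem:GLSflatdef}, using that $\CC[\Grr]$ coincides with $C(X_\maxNC,B_\maxNC)$ and with the upper cluster algebra $U(X_\maxNC,B_\maxNC)$ by \cite[Thm~3]{Sc06} --- so corresponding graded pieces are free $\basering$-modules of equal finite rank, and the graded inclusion \eqref{eq:gr-inc} becomes an isomorphism after $\otimes_{\basering}\genfld$. That matches the paper's argument essentially line for line.

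Your first paragraph, however, contains a false claim that should be cut: the cluster variables of $C(\Grr)$ are \emph{not} exactly the minors $\minor{I}$. Section~\ref{sec:grCA} only shows that every minor is a cluster variable; already $\Gr(3,6)$ has non-Pl\"ucker cluster variables, and in infinite cluster type there are infinitely many cluster variables versus only finitely many minors --- a point the paper itself makes at the end of Section~\ref{sec:grQCA} when invoking \cite{GLfin}. So the preliminary strategy you sketch --- proving surjectivity of \eqref{eq:gr-inc} by showing every cluster variable is a quantum minor --- cannot work, and if it did it would give equality over $\basering$ itself, which the paper explicitly says it cannot prove in general. The statement actually being transferred via flat deformation is $\CC[\Grr]=C(X_\maxNC,B_\maxNC)=U(X_\maxNC,B_\maxNC)$, not anything about the set of cluster variables. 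One further caution: normality of $\CC[\Grr]$ alone does not give coincidence with the upper cluster algebra; the reliable move is to cite Scott's theorem directly, as the paper does.
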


\begin{proof}
We have already noted that $\CC_q[\Grr]$ is a flat deformation of $\CC[\Grr]$ (Theorem~\ref{thm:defgr}).
Furthermore, we know that $\CC[\Grr]$ coincides with the classical cluster algebra $C(X_\maxNC, \mat{B}_\maxNC)$
and with its upper cluster algebra $U(X_\maxNC, \mat{B}_\maxNC)$ (see \cite[Theorem 3]{Sc06} and its proof).
Hence Theorem~\ref{thm:GLSflatdef} and Remark~\ref{rem:GLSflatdef} apply and we see that $\grQCA$ is a flat deformation of 
$C(X_\maxNC, \mat{L}_\maxNC)=\CC[\Grr]$.

Thus the graded pieces of $\CC_q[\Grr]$ and $\grQCA$ have the same rank in equal degrees
and so the inclusion becomes an isomorphism on tensoring with $\genfld$, as required.
\end{proof}

Now, the inclusion \eqref{eq:gr-inc} is an equality in the finite type cases, namely
$\Gr(2,\nn)$, for arbitrary $\nn$, and $\Gr(3,\nn)$, for $\nn=6,7,8$.
This follows, for $\Gr(2,\nn)$, because the cluster variables are all just (quantum) minors and, for these $\Gr(3,\nn)$,
because the additional cluster variables have been computed in \cite[\S3.2]{GLfin} and are explicit polynomials in 
$\CC_q[\Gr(3, \nn)]$.
Note that in the $\Gr(3,\nn)$ cases, the degree $2$ cluster variables do involve (odd) powers of $\vble$,
showing that the extension of scalars to $\basering$ is already necessary here.

On the other hand, for the remaining infinite type cases, 
Theorem~\ref{thm:generic-isom} is the strongest statement we know that could be informally rendered as
``the quantum Grassmannian is a quantum cluster algebra''.
We do not prove the stronger theorem that $\CC_q[\Grr]= \grQCA$
and we do not know of any proof in the literature,
but we have no reason to believe that it couldn't be true.


\end{document}